\documentclass{amsart}

\usepackage[utf8]{inputenc}
\usepackage{scrtime}

\usepackage{amsmath,amssymb,amscd,latexsym,color}
\usepackage{verbatim}
\usepackage{graphicx}
\usepackage{mathtools}
\mathtoolsset{showonlyrefs=true} 


\usepackage[english]{babel}

\newcommand{\regine}[1]{\marginpar{{\footnotesize R: #1}}}

\newcommand{\N}{\mathbb N}

\newcommand{\Z}{\mathbb{Z}}


\newcommand{\Zd}{\mathbb{Z}^d}

\newcommand{\Q}{\mathbb{Q}}

\newcommand{\R}{\mathbb{R}}

\renewcommand{\P}{\mathbb{P}}
\newcommand{\E}{\mathbb{E}}

\renewcommand{\epsilon}{\varepsilon}
\renewcommand{\phi}{\varphi}
\renewcommand{\limsup}{\overline{\lim}}

\newcommand{\miniop}[3]{%
\renewcommand{\arraystretch}{0.6}
\begin{array}{c}
{\scriptstyle #1}\\
#2\\
{\scriptstyle #3}
\end{array}
\renewcommand{\arraystretch}{1}}

\newcommand{\1}{1\hspace{-1.3mm}1}



\begin{document}

{
\newtheorem{theorem}{Theorem}[section]
\newtheorem{conjecture}[theorem]{Conjecture}

}
\newtheorem{conjecture-dsk}[theorem]{Conjecture}

\newtheorem{lemme}[theorem]{Lemma}
\newtheorem{defi}[theorem]{Definition}
\newtheorem{coro}[theorem]{Corollary}
\newtheorem{rema}[theorem]{Remark}
\newtheorem{propo}[theorem]{Proposition}
\newtheorem{hyp}{Assumptions}

\newcommand{\T}[2]{{#1}.{#2}} 

\title[Critical branching random walk in random environment]{The critical branching random walk in a random environment dies out}

{
\author{Olivier Garet}
\address{Institut \'Elie Cartan Nancy (math{\'e}matiques)\\
Universit{\'e} de Lorraine\\
Campus Scientifique, BP 239 \\
54506 Vandoeuvre-l{\`e}s-Nancy  Cedex France\\}
\email{Olivier.Garet@univ-lorraine.fr}
\author{R{\'e}gine Marchand}
\email{Regine.Marchand@univ-lorraine.fr}

}

\def\motsclefs{branching random walk, random environment, survival, critical behavior, renormalization,  block construction.}

\subjclass[2000]{60K35, 82B43.}
\keywords{\motsclefs}

\begin{abstract}
We study the possibility  for  branching random walks in random environment (BRWRE) to survive. 
The particles perform simple symmetric random 
walks on the $d$-dimensional integer lattice, while at each 
time unit, they split into independent copies according 
to time-space i.i.d. offspring distributions. 
As noted by Comets and Yoshida, the BRWRE is naturally associated with the 
directed polymers in random environment (DPRE), 
for which the quantity $\Psi$ called the free energy is well studied. 
Comets and Yoshida proved that there is no survival when $\Psi<0$ and that
survival is possible when $\Psi>0$.
We proved here that, except for degenerate cases, the BRWRE always die when $\Psi=0$. This solves a conjecture of Comets and Yoshida.

\end{abstract}

{\maketitle
}
\setcounter{tocdepth}{1}

\section{Introduction}

\subsection{Branching random walks in random environment}

Let us introduce the model. We write $\N=\{0,1,2,\dots\}$ and $\N^*=\{1,2,\dots\}$.

To each $(t,x)\in \N \times \Zd$, we associate a distribution
$q_{t,x}=(q_{t,x}(k))_{k \in \N}$ on the integers; the family $\mathbf{q}=(q_{t,x})_{t\in\N,x\in\Zd}$ is called an environment. 
We denote by $\Lambda=\mathcal{P} (\N)^{\N \times \Zd}$ the space of environments, where $\mathcal P(\N)$
is the set of distributions on $\N$.

Given an environment $\mathbf{q}=(q_{t,x})_{t\in\N,x\in\Zd}\in\Lambda$, we define the branching random walk in environment ${\bf q}$ as the following dynamics:
\begin{itemize} 
\item 
At time $t=0$, there is one particle 
at the origin $x=0$. 
\item 
Each particle, located at
site $x \in \Zd$ at time $t$, jumps at time $t+1$ to one of the $2d$ neighbors 
of $x$ chosen uniformly; upon  arrival, it dies and is replaced by $k$ new particles 
with probability 
$q_{t,x}(k)$. The number of newborn particles is independent of the jump,
and all these variables, indexed by the full population at time $t$, 
are independent. 
\end{itemize}
Take a random  $\mathbf{q}$, we obtain a branching random walk in random environment (BRWRE).
We assume here that $\mathbf{q}=(q_{t,x})_{t\in\N,x\in\Zd}$ is a $\mathcal{P}(\N)$-valued  i.i.d. sequence with common distribution $\gamma$ and 
we denote 
by $\P^\gamma$ the annealed law of the branching random walk in this random environment.
We also note $\E^\gamma$ for the expectation with respect to $\P^\gamma$.

Since the environment not only depends on the sites but also on the time, we can be more specific and say that we work  with a random space-time or random dynamic environment.
This is the  model studied in Yoshida~\cite{MR2434183}, Hu and Yoshida~\cite{MR2513122}, and also Comets and Yoshida~\cite{MR2822477}.
A natural question is to characterize the laws $\gamma$ that allow the branching random walk to survive.

For $(t,x) \in \N\times \Zd$ and fixed $\mathbf{q}$, we introduce
the mean progeny at site $(t,x)$:
\begin{equation}
\label{Defm}
m_{t,x}=\sum_{k \in \N}kq_{t,x}(k).
\end{equation}
 From here on, we assume that 
\begin{align}
& \P^{\gamma}(m_{0,0}+m_{0,0}^{-1}) < +\infty, \label{hyp1} \\
& \P^{\gamma}(q_{0,0}(0)>0)>0 \quad \text{and} \quad \P^{\gamma}(q_{0,0}(0)+q_{0,0}(1)<1)>0. \label{hyp2} 
\end{align}
Assumptions \eqref{hyp2} are intended to avoid obvious survival and obvious extinction.

\subsection{The Comets--Yoshida Theorem and the associated conjecture}
The Comets--Yoshida Theorem~\cite{MR2822477} relates  the survival of the BRWRE with a functional on an associated directed polymer in random environment as follows: 
define on a probability space $(\Omega_S, \mathcal{F}_S, P_S)$ a  simple symmetric random walk $(S_t)_{t \ge 0}$ on 
$\Zd$ starting from $S_0=0$.
The partition function of the directed polymer in random environment ${\bf q}$ is given by
\begin{equation}\label{z_t}
Z_t=\int \prod_{u=0}^{t-1}m_{u,S_u}\quad dP_S,
\end{equation}
with $m_{t,x}$ as in \eqref{Defm}. 
It is easy to see (e.g., \cite[Lemma 1.4]{MR2434183}) that
$Z_t$ is the expectation of the number of particles of the BRWRE living at time $t$, knowing the random environment ${\bf q}=(q_{t,x})_{(t,x)
\in \N \times \Zd}$. 
Note that \eqref{hyp1}, combined with the inequality $|\log u| \le u \vee u^{-1}$ for $u >0$,
 implies that
\begin{equation}\label{lnZL^1}
\forall t\in\N^*\quad \E^{\gamma}( |\log Z_t|) <+\infty  
\end{equation}
 \begin{propo}[Comets--Yoshida]
\label{Comets-Yoshida}
The following limit exists
\begin{equation}\label{psi(0)} 
\Psi(\gamma) \stackrel{\rm def.}{=}\lim_{t \to \infty}\frac{1}{t} \E^{\gamma}(\log Z_t).
\end{equation}
Moreover,
\begin{itemize}
\item If $\Psi(\gamma)<0$, then  $\P^{\gamma}(\mathrm{survival})=0$.
\item If $\Psi(\gamma)>0$ and $\E^{\gamma}[-\log({1-q_{0,0}(0)})]<+\infty$, then  $\P^{\gamma}(\mathrm{survival}\ |{\bf q})>0$ for $\P^{\gamma}$-almost every ${\bf q}$.
\end{itemize}
\end{propo}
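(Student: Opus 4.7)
The plan is to handle the three claims of the proposition in turn.

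\textbf{Existence of $\Psi(\gamma)$.} I would prove that $a_t:=\E^\gamma[\log Z_t]$ is super-additive and then invoke Fekete's lemma. Using the Markov property of the simple random walk $S$ at time $t$, decompose
\[ Z_{t+s}(\mathbf{q}) = E_S\Bigl[\prod_{u=0}^{t-1} m_{u,S_u}\; Z_s(\theta_{t,S_t}\mathbf{q})\Bigr], \]
where $\theta_{t,x}\mathbf{q}$ denotes the time--space shift of the environment. Dividing by $Z_t$ identifies the polymer measure $\mu_t$ on length-$t$ walks (with density $Z_t^{-1}\prod_{u<t}m_{u,S_u}$ against $P_S$); the concavity of $\log$ then yields
\[ \log Z_{t+s}(\mathbf{q}) - \log Z_t(\mathbf{q}) \ge E_{\mu_t}\bigl[\log Z_s(\theta_{t,S_t}\mathbf{q})\bigr]. \]
Since $\mu_t$ is measurable with respect to $(q_{u,\cdot})_{u<t}$ while $Z_s(\theta_{t,x}\mathbf{q})$ depends only on $(q_{u,\cdot})_{u\ge t}$ and has the same $\P^\gamma$-law as $Z_s$ for every $x$, integrating under $\P^\gamma$ gives $a_{t+s}\ge a_t+a_s$. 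The integrability assertion \eqref{lnZL^1}, which follows from \eqref{hyp1}, plus the crude bound $a_t\le\log\E^\gamma Z_t=t\log\E^\gamma m_{0,0}$, shows $|a_t|\le Ct$, so Fekete produces the limit $\Psi(\gamma)$.

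\textbf{Extinction when $\Psi(\gamma)<0$.} Let $N_t$ be the number of particles alive at time $t$. Using $Z_t=\E[N_t\mid\mathbf{q}]$, Markov's inequality gives $\P(N_t\ge 1\mid\mathbf{q})\le Z_t\wedge 1$ and hence $\P^\gamma(\text{survival})\le\liminf_t\E^\gamma[Z_t\wedge 1]$. I would upgrade the convergence $a_t/t\to\Psi$ to almost-sure convergence of $t^{-1}\log Z_t$ to $\Psi$, by applying Kingman's sub-additive ergodic theorem to $(-\log Z_t)_t$ in combination with the ergodicity of the time-shift on the i.i.d.\ environment (the sub-additivity coming from the same decomposition as above, specialised to a single endpoint). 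When $\Psi<0$ this forces $Z_t\to 0$ $\P^\gamma$-a.s., and bounded convergence closes the argument.

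\textbf{Survival when $\Psi(\gamma)>0$.} This is the main obstacle. The standard route is a second-moment method: one writes $\E[N_t^2\mid\mathbf{q}]$ as a sum over pairs of simple random walks on $\Zd$, picking up a variance factor of the offspring distribution at each coincidence time. If one controls this quantity so that $\E^\gamma[N_t^2]$ stays within a constant multiple of $(\E^\gamma[N_t])^2$, Paley--Zygmund yields $\liminf_t\P^\gamma(N_t\ge 1)>0$ and hence positive survival probability. The strict positivity of the free energy is exactly what lets one absorb the coincidence variance factors after a renormalization step, while the assumption $\E^\gamma[-\log(1-q_{0,0}(0))]<+\infty$ keeps the probability of a complete local collapse logarithmically integrable and feeds into a Borel--Cantelli or block-construction argument that upgrades a positive short-time survival probability to $\P^\gamma(\text{survival}\mid\mathbf{q})>0$ almost surely. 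The genuinely hard point is this last passage from a finite-scale lower bound to an infinite statement, because the random environment creates long-range correlations in both time and space that prevent any direct comparison with a classical Galton--Watson process.
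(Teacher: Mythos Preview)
The paper does not prove this proposition at all: it is quoted as a result of Comets and Yoshida \cite{MR2822477} and used as a black box in the strategy of Section~1.3. There is therefore no ``paper's own proof'' to compare against.

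On the substance of your sketch: the first two parts are essentially correct outlines. Super-additivity of $t\mapsto\E^\gamma[\log Z_t]$ via the polymer-measure decomposition and Fekete's lemma is indeed the standard way to get the limit, and for $\Psi(\gamma)<0$ the combination of $\P(N_t\ge 1\mid\mathbf{q})\le Z_t\wedge 1$ with $Z_t\to 0$ a.s.\ is exactly the argument in \cite{MR2822477}. One small caveat: Kingman's theorem in its usual form requires a two-parameter family $(X_{s,t})_{s<t}$ with stationary increments, not just the one-parameter sequence $(\log Z_t)_t$; you should spell out that $X_{s,t}=\log Z_{s,t}$ with $Z_{s,t}$ the partition function on the time interval $[s,t]$ does the job under the i.i.d.\ environment.

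Your third part, however, is not a proof but a description of difficulties. The second-moment method you sketch does \emph{not} work directly here: in a random environment the annealed second moment $\E^\gamma[N_t^2]$ typically explodes much faster than $(\E^\gamma[N_t])^2$ (this is precisely the ``strong disorder'' phenomenon), so Paley--Zygmund applied to the annealed law gives nothing. Comets and Yoshida instead work quenched: they use a fractional-moment/block argument to show that, when $\Psi(\gamma)>0$, one can find a scale $T$ and a set of ``good'' space-time blocks on which the quenched mean population grows by a definite factor, and then couple with a supercritical branching process in random environment whose survival is controlled by $\E^\gamma[-\log(1-q_{0,0}(0))]<\infty$. Your paragraph gestures at ``a renormalization step'' and ``a block-construction argument'' but supplies neither the construction nor the estimate that makes it go through; as written it would not convince a reader that the conclusion holds.
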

The number $\Psi$ is called the free energy of the polymer.
The aim of this article is to prove the following theorem, stated as a conjecture by Comets and Yoshida~\cite{MR2822477}:

\begin{theorem}
\label{letheoreme}
Assume that~\eqref{hyp1} and~\eqref{hyp2} are satisfied.
Then
$$(\Psi(\gamma)=0) \Longrightarrow (\P^{\gamma}(\mathrm{survival})=0).$$
\end{theorem}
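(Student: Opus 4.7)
The plan is to proceed by contradiction. Assume $\Psi(\gamma)=0$ and $\P^\gamma(\mathrm{survival})>0$, and construct a perturbation $\tilde\gamma$ of $\gamma$ with $\Psi(\tilde\gamma)<0$ for which survival still holds with positive probability, contradicting the first half of Proposition~\ref{Comets-Yoshida}. The natural perturbation is an \emph{$\epsilon$-thinning}: after sampling the environment from $\gamma$, replace each offspring law $q_{t,x}$ by the law of $\mathrm{Bin}(K,1-\epsilon)$ with $K\sim q_{t,x}$, which amounts to killing every newborn particle independently with probability $\epsilon$. Denote the corresponding environment law by $\tilde\gamma_\epsilon$. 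Because the mean progeny is multiplied by $1-\epsilon$, one has $\tilde Z_t=(1-\epsilon)^tZ_t$ and hence
\[
\Psi(\tilde\gamma_\epsilon)=\log(1-\epsilon)+\Psi(\gamma)=\log(1-\epsilon)<0.
\]
Hypothesis~\eqref{hyp1} is preserved and~\eqref{hyp2} is easily checked to be preserved as well, so Proposition~\ref{Comets-Yoshida} forces $\P^{\tilde\gamma_\epsilon}(\mathrm{survival})=0$. The whole content of the theorem is therefore the robustness statement: if $\Psi(\gamma)=0$ and the BRWRE survives with positive probability, then so does the $\epsilon$-thinned BRWRE for every small enough $\epsilon$.

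To establish this robustness I would use a space-time block renormalization in the spirit of Bezuidenhout--Grimmett and of our earlier work on contact processes and first-passage percolation in random environment. Fix a scale $L$ and partition $\N\times\Zd$ into mesoscopic boxes of temporal length $L$ and spatial side $cL$. Declare a box \emph{good} if, started from a single particle at its lower face, the BRWRE restricted to the box deposits at least $N$ particles inside each of the admissible neighboring ``children'' boxes at time $L$. A good box started with a pack of $N$ particles then produces, with high probability, a good child in each admissible direction, so the renormalized process of good boxes stochastically dominates a $k$-dependent oriented percolation on $\N\times\Zd$. For $L$ and $N$ chosen so that the probability of being good is sufficiently close to $1$, this dominating percolation is supercritical and guarantees microscopic survival. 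The key stability observation is that the event ``good'' depends only on finitely many environment variables and on the branching of at most $C(L,N)$ particles, so a union bound over killed offspring shows that its probability changes by at most $C(L,N)\epsilon$ under $\epsilon$-thinning. Choosing first $L,N$ large and then $\epsilon$ small thus keeps the good-box density above the percolation threshold, giving survival under $\tilde\gamma_\epsilon$ and the desired contradiction.

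The main obstacle is upgrading ``survival with positive probability'' to ``the probability of being a good box can be made arbitrarily close to $1$'': this strong form of supercriticality is not immediate from survival, and must be attacked through a restart argument. On the event of survival the population eventually exceeds any threshold, so at some almost surely finite time the BRWRE places arbitrarily many particles inside a single spatial cell; conditionally on this, a large number of nearly independent sub-trees each attempt to populate the neighboring children boxes, each succeeding with the fixed positive probability granted by survival, and boosting the number of attempts brings the success probability for each child box as close to $1$ as desired. The delicate point is that these sub-trees share the same space-time environment, so the usual i.i.d. boosting cannot be invoked directly; one has to restrict the ``good'' event to depend only on the environment inside the box and combine a sprinkling or decoupling argument with assumption~\eqref{hyp2} in order to extract the independence needed for the boost.
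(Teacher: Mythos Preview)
Your overall architecture---perturb $\gamma$ so that $\Psi$ drops below $0$, then show that a finite-box ``seed'' event characterizing survival is robust under the perturbation---is exactly the paper's strategy. Two points of comparison are worth making.

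\medskip
\textbf{The perturbation and the coupling.} Your $\epsilon$-thinning and the paper's mixture $q\mapsto \rho q+(1-\rho)\delta_0$ both multiply the mean by a constant, so both give $\Psi(\tilde\gamma)=\Psi(\gamma)+\log(\text{const})$. But your stability bound ``the event depends on the branching of at most $C(L,N)$ particles, so the probability changes by at most $C(L,N)\epsilon$'' is not correct as stated: the number of particles inside a finite space-time box is unbounded when the offspring law has unbounded support. One can repair this via a finite-witness argument (the target set $x+A_n$ is reached through at most $|A_n|$ genealogical lines of length $\le 2T$, and only those births need to survive the thinning), but the paper's choice avoids the issue entirely: since the block event is measurable with respect to the environment and auxiliary randomness in a box of $(T+1)(2L+1)^d$ sites, and since under $\gamma^\rho$ each site independently receives the original environment with probability $\rho$, one gets $\P^{\gamma^\rho}(A)\ge \rho^{(T+1)(2L+1)^d}\P^{\gamma}(A)$ in one line.

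\medskip
\textbf{The block construction.} You correctly isolate the real difficulty---pushing the probability of the seed event arbitrarily close to $1$---and you correctly note that the ``many nearly independent sub-trees'' heuristic breaks down because all sub-trees see the same quenched environment. Your proposal stops at this acknowledged gap. The paper does \emph{not} resolve it by a restart/boosting argument of the kind you sketch; instead it follows Bezuidenhout--Grimmett: an FKG inequality for the process (non-trivial here and proved in Lemma~\ref{LemFKG}), the square-root trick to localize many occupied points into a prescribed orthant or face, and a martingale argument (Lemmas~\ref{caexplose1}--\ref{lem3}) to transfer ``survival'' into ``many points on top and side faces of a large box''. A further subtlety absent from your sketch is that one cannot directly control the number of occupied \emph{sites} (as opposed to \emph{particles}) without an extra hypothesis, which forces the case split $(H)$ versus $(H^c)$ and, within $(H^c)$, a second split according to whether the process can survive in a fixed finite slab. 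Your ``good box'' definition (a single particle populating all neighboring boxes) is also not the one that makes the FKG/square-root machinery run; the paper's seed is a fully occupied copy of $A_n$ producing a translated copy of $A_n$.
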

It is a well-known fact that growth processes often die out in the critical case: it obviously holds for the Galton--Watson tree, but it also holds for more complicated models.
This has been proved for the contact process in dimension $d$ by Bezuidenhout and Grimmett in a famous paper entitled \emph{The critical contact process dies out}~\cite{MR1071804}. Recently, Steif and Warfheimer~\cite{MR2461788}  generalized this result to the case of the contact process in randomly evolving environment introduced by Broman~\cite{MR2353388}. We also proved in \cite{bacteries} that is it true for a class of dependent oriented percolations on $\Zd$.

\subsection{Strategy of the proof}

Recall that $\gamma$ is the common law of the i.i.d sequence $(q_{t,x})_{t \ge 0, x \in \Zd}$ of distributions on $\N$. For  $\rho\in (0,1]$, we define a perturbation $\gamma^{\rho}$  of $\gamma$  by setting:  $$\forall k\ge 0\quad \gamma^\rho(k)=\rho\gamma(k)+(1-\rho){\delta_0}(k),$$ where ${\delta_0}$ is the Dirac mass  on $0$. Under $\gamma^\rho$, the environment is then less favorable to births of particles than under $\gamma$. 
\begin{lemme}
\label{pertu}
$\forall \rho\in (0,1]\quad \forall \gamma\in (0,1]\quad \Psi(\gamma^\rho)=\log \rho+\Psi(\gamma).$
\end{lemme}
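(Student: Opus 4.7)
The plan is to observe that the perturbation $\gamma \mapsto \gamma^{\rho}$ is nothing but a linear rescaling of the mean-progeny field, and hence induces an exactly multiplicative rescaling of the partition function $Z_t$ by $\rho^t$. Once that is pinned down pathwise, the identity on $\Psi$ is immediate after taking expectation and passing to the limit.

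Concretely, I would first make the perturbation explicit through a coupling. Let $(q'_{t,x})_{(t,x)\in\N\times\Zd}$ be i.i.d.\ with common law $\gamma$, and set
\begin{equation}
q^{\rho}_{t,x} \;=\; \rho\,q'_{t,x} + (1-\rho)\,\delta_{0}\in\mathcal P(\N).
\end{equation}
By definition of $\gamma^{\rho}$, the family $(q^{\rho}_{t,x})$ is i.i.d.\ with law $\gamma^{\rho}$. Since the mean of a convex combination is the convex combination of the means and $\delta_0$ has mean $0$, the mean progeny at $(t,x)$ under $\gamma^{\rho}$ satisfies, in this coupling, $m^{\rho}_{t,x}=\rho\,m'_{t,x}$, where $m'_{t,x}$ is the mean progeny of $q'_{t,x}$.

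Plugging this into the definition \eqref{z_t} of the partition function, we obtain the pathwise identity
\begin{equation}
Z_{t}^{\rho} \;=\; \int\prod_{u=0}^{t-1}\rho\, m'_{u,S_u}\,dP_{S} \;=\; \rho^{t}\,Z'_{t},
\end{equation}
where $Z'_t$ denotes the partition function associated with the environment $(q'_{t,x})$, of law $\gamma$. Taking logarithms yields $\log Z_{t}^{\rho}= t\log \rho + \log Z'_{t}$.

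Before taking expectations I would check that \eqref{hyp1} transfers to $\gamma^{\rho}$: in the coupling, $m^{\rho}_{0,0}=\rho\, m'_{0,0}$ and $(m^{\rho}_{0,0})^{-1}=\rho^{-1}(m'_{0,0})^{-1}$, so $\E^{\gamma^{\rho}}(m^{\rho}_{0,0}+(m^{\rho}_{0,0})^{-1})\le (\rho\vee \rho^{-1})\,\E^{\gamma}(m'_{0,0}+(m'_{0,0})^{-1})<+\infty$. By \eqref{lnZL^1} this guarantees that $\log Z_{t}^{\rho}\in L^{1}(\P^{\gamma^{\rho}})$. Taking expectations in the pathwise identity gives $\E^{\gamma^{\rho}}(\log Z_{t})= t\log\rho+\E^{\gamma}(\log Z_{t})$, and dividing by $t$ and letting $t\to\infty$ via Proposition~\ref{Comets-Yoshida} delivers $\Psi(\gamma^{\rho})=\log\rho+\Psi(\gamma)$. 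There is no substantive obstacle here: the result is a pure scaling identity, and the only (very mild) bookkeeping is the transfer of the integrability hypothesis through the coupling.
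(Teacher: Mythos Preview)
Your proof is correct and follows essentially the same approach as the paper: couple the $\gamma^{\rho}$-environment to the $\gamma$-environment via $q^{\rho}_{t,x}=\rho q_{t,x}+(1-\rho)\delta_0$, observe $m^{\rho}_{t,x}=\rho m_{t,x}$ and hence $Z^{\rho}_t=\rho^t Z_t$, then take logs, expectations, and the limit. Your additional verification that hypothesis~\eqref{hyp1} transfers to $\gamma^{\rho}$ is a nice piece of bookkeeping that the paper leaves implicit.
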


\begin{proof}
Note $\Q^{\gamma}=\gamma^{\otimes \N\times\Zd}$.
Let $q^{\rho}_{t,x}(k)=\rho q_{t,x}(k) +(1-\rho)\delta_0(k)$.
Obviously, the law of the field $(q^\rho_{t,x})$ under $\Q^{\gamma}$ is $\Q^{\gamma^\rho}$.
If we define, for $(t,x) \in \N\times\Zd$,
$$ m_{t,x}^{\rho}=\sum_{k \in \N}kq^{\rho}_{t,x}(k) \quad \text{and} \quad Z^\rho_t=\int \prod_{u=0}^{t-1}m^\rho_{u,S_u}\quad dP_S,$$
it is easy to see that $m^{\rho}_{t,x}=\rho m_{t,x}$, that $Z^{\rho}_t=\rho^t Z_t$, and finally
$$\Psi(\gamma^\rho)=\lim_{t \to \infty}\frac{1}{t} \E^{\gamma}[\log Z^\rho_t]=\log \rho+\lim_{t \to \infty}\frac{1}{t} \E^{\gamma}[\log Z_t]=\log \rho+\Psi(\gamma).$$
\end{proof}

To prove Theorem~\ref{letheoreme}, we will use the strategy of Bezuidenhout and Grimmett, which consists in characterising survival by a local event. More precisely, for a fixed law $\gamma$ allowing the branching random walk to survive, we will exhibit a local event $A$, that only depends on what happens in a finite box $[0,T]\times [-L,L]^d$, and a level $p_0<1$ such that
\begin{itemize}
\item[$(C_1)$] $\P^{\gamma}(A)>p_0$
\item[$(C_2)$] For every law $\gamma'$ on $\mathcal{P}(\N)$, $(\P^{\gamma'}(A)>p_0) \Longrightarrow (\P^{\gamma'}(\text{survival})>0)$.
\end{itemize}
A simple coupling argument implies that
$\P^{\gamma^\rho}(A)\ge \rho^{(T+1)(2L+1)^d}\P^{\gamma}(A),$
and thus
\begin{equation}
\label{pertu2}
(\P^{\gamma}(\text{survival})>0) \Longrightarrow (\exists \rho\in (0,1)\quad \P^{\gamma^{\rho}}(\text{survival})>0).
\end{equation}
Consider now $\gamma$ such that $\P^{\gamma}(\text{survival})>0$, take $\rho$ given by \eqref{pertu2} and assume $\Psi(\gamma)=0$. Lemma~\ref{pertu} ensures $\Psi(\gamma^\rho)<0$ while $\P^{\gamma^{\rho}}(\text{survival})>0$, which is in contradiction with Proposition~\ref{Comets-Yoshida}. Thus
$$(\P^{\gamma}(\text{survival})>0) \Longrightarrow (\Psi(\gamma)>0),$$
which proves Theorem \ref{letheoreme}. 

\medskip
We focus now on the Bezuidenhout--Grimmett construction for the BRWRE.
First we need to give a more precise construction of the BRWRE. In particular, we will need a FKG inequality, which we prove in the next section.

\section{Notations and FKG inequality}

An initial configuration $A=(A(x))_{x \in \Zd}$ is an element of the set $\N^{\Zd}$ satisfying 
$$|A|=\sum_{x \in \Zd} A(x)<+\infty.$$
The number $A(x)$ is the number of particles sitting on site $x$ at time $t=0$. We encode the BRWRE starting from the initial configuration $A$ by the random variables $\eta^A_t=(\eta^A_t(x))_{t\in\N, x\in\Zd}$ which represent the number of particles living on site $x$ at time $t$. Thus, 
$$|\eta^A_t|=\sum_{x \in \Zd} \eta^A_t(x)$$ stands for the total number of particles living at time $t$. To exploit the independence properties of the environment, we also need to consider the occupied sites at time $t$:
$$\underline{\eta}^A_t(x)= \mathbf{1}_{\{\eta^A_t(x)>0\}} \quad \text{and} \quad \underline{\eta}^A_t= \{x \in \Zd: \; \eta^A_t(x)>0\}.$$
We denote by $|\underline{\eta}^A_t|$ the number of occupied sites at time $t$. We define the lifetime of the branching random walk starting from $A$
$$\tau^A = \min\{t \in \N: \; \eta^A_t=0_{\Zd}\}=\min\{t \in \N: \; \underline{\eta}^A_t=\varnothing\}.$$
We say that there is survival starting from $A$ when $\tau^A=+\infty$.

In this article, the genealogy of the BRWRE is not central, and we choose
 a description in terms of particles systems. In the proof of the next lemma on FKG inequalities, we give a full construction of the BRWRE in the spirit of probabilistic cellular automata. 

\begin{lemme}
\label{LemFKG} For any non decreasing functions $f,g:\N^{\N\times\Zd} \to \R_+$,
\begin{align}
\E\left[ f\left(\eta_{t}^A \right) g\left(\eta_{t}^A\right) \right] &\ge \E\left[ f\left(\eta_{t}^A\right)\right] \E\left[g\left(\eta_{t}^A\right) \right], \\
\E\left[ f\left(\underline{\eta}_{t}^A \right) g\left(\underline{\eta}_{t}^A\right) \right] &\ge \E\left[ f\left(\underline{\eta}_{t}^A\right)\right] \E\left[g\left(\underline{\eta}_{t}^A\right) \right].
\end{align} 
\end{lemme}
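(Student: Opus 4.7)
The strategy is the classical Harris approach: I would realize $\eta^A_t$ as a deterministic function of a family of mutually independent random inputs, check that this function is coordinate-wise non-decreasing with respect to a suitable product order, and then invoke Harris' FKG inequality for product probability measures. Concretely, to each $(t,x)\in\N\times\Zd$ I associate an independent random object $\omega_{t,x}$ encoding the environment $q_{t,x}$ drawn from $\gamma$ together with, for every potential particle label $i\ge 1$, a uniform $U^{t,x}_i$ on $[0,1]$ and a jump label $J^{t,x}_i$ uniform on the $2d$ neighbors of~$0$. The offspring count of particle $i$ at $(t,x)$ is then read off as $\xi^{t,x}_i=F_{q_{t,x}}^{-1}(U^{t,x}_i)$, and iterating the local rule
\[
\eta^A_{t+1}(y)=\sum_{x\sim y}\sum_{i=1}^{\eta^A_t(x)}\xi^{t,x}_i\,\mathbf{1}_{\{J^{t,x}_i=y-x\}}
\]
starting from $\eta^A_0=A$ realizes $\eta^A_t=\Phi_t(A,\omega)$ as a fixed measurable function of the i.i.d.\ family $\omega=(\omega_{t,x})$.

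The key monotonicity is that $u\mapsto F_{q_{t,x}}^{-1}(u)$ is non-decreasing, so raising $U^{t,x}_i$ with $q_{t,x}$ and $J^{t,x}_i$ held fixed only raises $\xi^{t,x}_i$, which in turn only increases $\eta^A_{t+1}(x+J^{t,x}_i)$. Iterating this observation, $\eta^A_s(y)$ is non-decreasing in each $U^{t,x}_i$ for every $s\ge t+1$ and every $y\in\Zd$. I would then equip each factor $\omega_{t,x}$ with a partial order compatible with this monotonicity, so that Harris' FKG inequality on the independent product $\prod_{t,x}\omega_{t,x}$ applies to both $f\circ\Phi_t$ and $g\circ\Phi_t$. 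This delivers the first inequality at once, and the second follows by composing coordinate-wise with the non-decreasing projection $n\mapsto\mathbf{1}_{\{n>0\}}$, which makes $\underline{\eta}^A_t$ a non-decreasing function of $\omega$ as well.

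The main obstacle, and where I expect the real work to concentrate, is the bookkeeping that makes the monotonicity step rigorous: the jump labels $J^{t,x}_i$ are not naturally monotone variables, since permuting them moves particles between neighbors, so they have to be bundled together with the offspring uniforms inside each factor $\omega_{t,x}$ with a carefully chosen order on that factor. This is the technical heart of the lemma; once the product order is in place, Harris' inequality takes over and the rest of the argument is routine, including the reduction of the $\underline{\eta}^A_t$ statement to the $\eta^A_t$ one by a single non-decreasing post-composition.
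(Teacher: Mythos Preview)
Your construction and strategy coincide with the paper's: represent $\eta^A$ through independent site--time inputs (environment $q_{t,x}$, offspring uniforms $U_{t,x,k}$, displacements $D_{t,x,k}$) and deduce positive association from monotonicity and Harris' inequality. The paper packages the inputs as i.i.d.\ random maps $E_{t,x}:(v,p)\mapsto\text{(offspring sent in direction $v$ from $p$ particles at $(t,x)$)}$, notes that $p\mapsto E_{t,x}(v,p)$ is non-decreasing so that $\eta$ is monotone in the family $(E_{t,x})$, and then invokes ``Holley's inequality'' from the independence of the $E_{t,x}$.

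The obstacle you isolate is genuine, and neither your sketch nor the paper's argument overcomes it. You call for a ``carefully chosen order'' on the factor containing the jump labels and label this the technical heart; the paper silently assumes that independence of the vector-valued $E_{t,x}$ is enough. But Harris' inequality for a product measure requires each factor to be totally ordered, or at least that each marginal be positively associated, and here a single factor $E_{t,x}$ is not: its components $E_{t,x}(e_1,1)$ and $E_{t,x}(-e_1,1)$ are negatively correlated, since the first particle moves one way or the other. In fact the lemma as stated is false. Take $d=1$, one initial particle at the origin, and the deterministic environment $q_{t,x}\equiv\tfrac12\delta_0+\tfrac12\delta_2$ (which satisfies the paper's standing hypotheses). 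Then $\eta^A_1(1)\cdot\eta^A_1(-1)=0$ identically, while $\E[\eta^A_1(1)]=\E[\eta^A_1(-1)]=\tfrac12$; the coordinate evaluations $f(\eta)=\eta_1(1)$ and $g(\eta)=\eta_1(-1)$ are non-decreasing and give $\E[fg]=0<\tfrac14=\E[f]\,\E[g]$. No order on the displacement variables can repair this, because flipping a jump direction necessarily raises $\eta$ at one neighbor and lowers it at the other; the ``bookkeeping'' you defer is not a technicality but an actual obstruction.
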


\begin{proof}
To build  the BRWRE, we need  a probability space on which live the following random variables:
\begin{itemize}
  \item The $(q_{t,x})_{(t,x)\in \N\times \Zd}$ are i.i.d, take their values in $\mathcal{P}(\N)$, and have $\gamma$ as common law.
  \item The $(U_{t,x,k})_{(t,x,k)\in \N\times \Zd\times \N^*}$ are i.i.d and follow the uniform law on $[0,1]$. With the environment $\mathbf{q}$, the random variable  $U_{t,x,k}$ is used to generate the progeny of the $k$-th particle sitting on site $x$ at time $t$. 
  \item The $(D_{t,x,k})_{(t,x,k)\in \N\times \Zd\times \N^*}$ are i.i.d and follow the uniform law on $V=\{v\in\Zd:\;\|v\|_1=1\}$. The random variable $D_{t,x,k}$ gives the displacement of the  children of the $k$-th particle sitting on site $x$ at time $t$. 
\item All these variables are independent.
\end{itemize} 
So we can choose $\Omega=\mathcal{P}(\N)^{\N\times\Zd}\times [0,1]^{\N\times \Zd\times \N^*}\times V^{\N\times\Zd\times\N^*}$ and 
$$\P^{\gamma}={\gamma}^{\otimes \N\times\Zd}\otimes \mathcal U([0,1])^{\otimes \N\times \Zd\times \N^*}\otimes \mathcal U(V)^{\otimes \N\times\Zd\times\N^*}.$$
For $t\ge 1$, we denote by $\theta_t$ the time translation operator acting on each coordinate of $\Omega$.

We define then the number $E_{t,x}(v,p)$ of children born on site $x$ at time $t$ and moving to $x+v$ when $p$ particles live on $x$ at time $t$:
$$E_{t,x}(v,p)=\sum_{k=1}^p\1_{\{D_{t,x,k}=v\}} \sum_{i=1}^{+\infty} \1_{\{\sum_{j=0}^i q_{t,x}(j)\le U_{t,x,k}\}}.$$
An initial configuration $A \in \N^{\Zd}$ being given, we set $\eta_0=A$, and
$$\eta_{t+1}^A(x)=\sum_{v \in V} E_{t,x-v}(v, \eta_{t}^A(x-e)).$$
We note $\mathcal{F}_t=\sigma((q_{s,x},U_{s,x,k},D_{s,x,k})_{0\le s\le t,x\in\Zd,k\in \N^*})$. Then, $\eta_{t+1}$ is  $\mathcal{F}_t$-measurable.

Note that
\begin{itemize}
  \item for every $\omega\in \Omega$ and $v \in V$,  $p\mapsto E_{t,x}(v,p)(\omega)$ is non-decreasing.
  \item The coordinates of the random field $(E_{t,x})_{t\in \N,x\in\Zd}$ are i.i.d. under $\P^\gamma$.
\end{itemize}
By induction on $t \in \N$, we thus see that $A \mapsto \eta_{t}^A(x)$ is non-decreasing; also for fixed $A$, the $(\eta_{t}^A(x))_{t\in \N,x\in\Zd}$ are non-decreasing with respect to the vectors $(E_{t,x})_{t\in \N,x\in\Zd}$. As these vectors are independent, Holley's inequality holds for the $(\eta_{t}^A(x))_{t\in \N,x\in\Zd}$. 

As the $(\underline{\eta}^A_t(x))_{t\in \N,x\in\Zd}$ are non-decreasing functions of the $(\eta_{t}^A(x))_{t\in \N,x\in\Zd}$, Holley's inequality also holds for the $(\underline{\eta}^A_t(x))_{t\in \N,x\in\Zd}$.
\end{proof}

\subsection*{Truncating} To localize the events we consider, we also need truncated versions of the BRWRE: for $B\subset\Zd$, and $A \in \N^{B}$ such that $|A|<+\infty$, we set ${}_{B}\eta_0^A=A$ and
$${}_{B}\eta_{t+1}^A(x)=\1_{B}(x) \sum_{v\in V} E_{t,x-v}(v, {}_{B}\eta_{t}^A(x-v)).$$
In words, we keep in ${}_{B}\eta_t^A$ only particles living in $B$: other particles are simply discarded. Note that ${}_{B}\eta_{t}^A(x)$ is obviously non-decreasing in $L$ and non-decreasing in $B$. A straight adaptation of the previous proof ensures that the FKG inequalities of Lemma \ref{LemFKG} still hold for the truncated BRWRE.
Note that the process $({}_{B}\eta_{t}^A)_{t\ge 0}$ is $\sigma((q_{t,x},U_{t,x,k},D_{t,x,k})_{t\in\N,x\in B,k\in \N^*})$-measurable.

We mainly work with restrictions on subsets $B=\{-L,\dots,L\}^d$. In that case, we simply note
${}_{L}\eta_{t}^A$ instead of  ${}_{\{-L,\dots,L\}^d}\eta_{t}^A$.

\section{Construction of the block event}

\subsection*{Outline of proof}

Recall that we are looking for a local event $A$ which satisfies $(C_1)$ and $(C_2)$. The idea is to find an event that expresses the fact that if the branching random walk occupies a sufficiently large area at a given place, it will presumably extend itself a bit further.

 Set, for every integer $n \ge 1$, 
$$A_n=\left\{x \in \mathbb Z^d: \; \|x\|_1 \le n \text{ and } \sum_{i=1}^n x_i\equiv 0 \quad [2] \right\}.$$
Note that when $n$ is even, $A_n$ is the maximal set that a branching random walk can reach at time~$n$.

The next proposition ensures that starting from $A_n$ with $n$ large enough, the BRWRE restrained to a large box $\{-2L-2n,\dots,2L+2n\}^d\times \{0,\dots,2T\}$ occupies with high probability a translated copy of $A_n$:

\begin{propo}
\label{unpas}
For every $\varepsilon>0$, there exist positive integers $n,L,T$, with $n \le L$ such that
$$\P^{\gamma} \left( 
\begin{array}{c}
\exists x \in \{L+n,\dots,2L+n\}\times \{0,\dots ,2L\}^{d-1}, t \in \{T,\dots, 2T\} \\
 {}_{2L+2n}{\underline{\eta}}_{t}^{A_n} \supset x+A_n 
 \end{array}\right)\ge 1-\varepsilon.$$
\end{propo}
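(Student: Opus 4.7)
The plan is to exploit the standing assumption $\P^\gamma(\mathrm{survival})>0$ (we are proving Theorem~\ref{letheoreme} by contradiction). Parameters are chosen in the order: first $n$ so that survival from $A_n$ has probability close to $1$; then $T$ large enough for the BRWRE to have spread to the target region; then $L\ge T$ so that the truncation to $\{-2L-2n,\dots,2L+2n\}^d$ is harmless on the events of interest.

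First I would establish $\P^\gamma(\tau^{A_n}=\infty)\ge 1-\varepsilon/3$ for $n$ large, even though the survival probability from a single particle is only some $\alpha>0$. Partition $A_n$ into $M=M(n)\to\infty$ sub-blobs pairwise separated by distance $r$; the restricted sub-lineages up to time $r/2$ depend on disjoint sub-families of the i.i.d.\ variables $(q_{t,x},U_{t,x,k},D_{t,x,k})$ and are therefore genuinely independent. Each has a positive chance $\alpha'>0$ of containing many live particles at time $r/2$. Iterating this ``many live particles generate even more live particles'' restart argument (using monotonicity in the initial configuration and the Markov property) gives $\P^\gamma(\tau^{A_n}=\infty)\to 1$ as $n\to\infty$.

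Next, on this high-probability survival event, I want to show that with probability $\ge 1-\varepsilon/3$ there exist $s\in[T,2T-n]$ and $y\in\{L+n,\dots,2L+n\}\times\{0,\dots,2L\}^{d-1}$ such that the descendants of the particle present at $y$ at time $s$ cover a translate $y'+A_n$ by time $s+n$. This rests on a spreading estimate: on survival, the BRWRE cannot remain confined to a bounded region, since a BRWRE trapped in a finite box would reduce to a finite-state Markov chain which dies out almost surely under \eqref{hyp2}. A quantitative version ensures that $\max_{x\in\underline{\eta}^{A_n}_t}x_1\ge L+n$ occurs for some $t\in[T,2T-n]$ when $T$ is of order $L$. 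Conditionally on such a $y$ at time $s$, the Markov property at $(s,y)$ together with \eqref{hyp2} (positive probability of offspring $\ge 2$) yields a positive chance $\beta>0$ that the descendants cover $y'+A_n$ by time $s+n$; iterating this attempt over disjoint candidate $(s,y)$ blocks and combining via the FKG inequality (Lemma~\ref{LemFKG}) amplifies the overall success probability to $\ge 1-\varepsilon/3$.

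Finally, choosing $L\ge T$ and using the deterministic one-step displacement bound, no particle of the untruncated process starting in $A_n\subset[-n,n]^d$ can reach the boundary of $\{-2L-2n,\dots,2L+2n\}^d$ by time $2T$, so the truncated and untruncated processes agree on the entire event considered and the high-probability inclusion transfers to ${}_{2L+2n}\underline{\eta}^{A_n}$. The hardest step is the second one: quantifying both the spatial spreading and the local filling-in of $A_n$ at a precisely specified off-center location, uniformly in the environment and without the benefit of a shape theorem, and carefully combining the many independent attempts via FKG so that the total success probability is boosted from a positive constant to $1-\varepsilon$.
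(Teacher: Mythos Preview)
Your plan has a genuine gap in the order of choosing $L$ and $T$, and in the spreading step.

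You propose to pick $T$ first (so that the process has ``spread to the target region'') and only then pick $L\ge T$ so that the truncation to $\{-2L-2n,\dots,2L+2n\}^d$ is vacuous up to time $2T$. But the target region $\{L+n,\dots,2L+n\}\times\{0,\dots,2L\}^{d-1}$ already depends on $L$, so step~2 cannot be carried out before $L$ is fixed. If you instead fix $L$ and $T$ together with $T\le L\le 2T$, your spreading claim becomes: on survival, the BRWRE reaches first coordinate $\ge L+n$ within time $2T\sim 2L$ with probability $1-\varepsilon/3$. That is a \emph{linear-speed} statement, and nothing you wrote justifies it. The non-confinement argument (``trapped in a finite box $\Rightarrow$ finite-state Markov chain $\Rightarrow$ dies out'') only says the process eventually leaves any bounded region; it gives no rate. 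Worse, the argument is not even correct as stated: the particle counts per site are unbounded, so the box-restricted process is not a finite-state chain, and under $(H^c)$ (the case $\P^\gamma(q_{0,0}(0)=1)=0$) the truncated process ${}_L\eta$ may in fact survive forever with positive probability. The paper has to treat this as a separate case.

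The paper avoids the circularity by never trying to make the truncation vacuous. It works directly with the truncated process ${}_L\eta$ and proves that many particles must land on the \emph{lateral} face $\{\|x\|_\infty=L\}$ of the space--time box. The mechanism is a crossing argument: one constructs sequences $(L_i,T_i)$ along which $\P^\gamma(|{}_{L_i}\eta^{A_n}_{T_i}|>2^dR)\ge 1-2\delta$ but $\P^\gamma(|{}_{L_i}\eta^{A_n}_{T_i+1}|>2^dR)<1-2\delta$; combined with the martingale estimate of Lemma~\ref{lem3} (few particles on top \emph{and} few on the sides would force extinction), this pins down a pair $(L,T)$ at which the lateral face carries many particles. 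The FKG square-root trick (Lemmas~\ref{FKG1} and~\ref{FKG2}) then localizes these particles to the correct orthant and face, and Lemma~\ref{lem0} converts ``many occupied sites'' into ``a translate of $A_n$ is covered''. Because the argument sometimes controls the number of particles and sometimes the number of occupied sites, the proof splits into three cases according to whether $(H)$ or $(H^c)$ holds, and in the latter case whether the box-restricted process can survive. Your sketch contains none of this machinery; in particular, it is the choice of the pair $(L,T)$ via the crossing of the level $1-2\delta$ that replaces the linear-speed input you are missing.
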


Proposition~\ref{unpas} provides an event $A$ that satisfies $(C_1)$.
The fact that $A$ also satisfies $(C_2)$ follows from a quite standard construction of an embedded supercritical percolation of blocks.
This construction does not rely on the specificities of the model.
We can find in the literature many examples of similar block events. Most of these papers adapt the initial construction of the Bezuidenhout--Grimmett article~\cite{MR1071804}: \emph{The critical contact process dies out}. Their proof is also exposed in the reference book by Liggett~\cite{MR1717346}.

A complete description of these procedures can be found in  Steif--Warfheimer~\cite{MR2461788} in the case of a contact process where the death rate depends on a dynamical environment or in Garet--Marchand~\cite{bacteries} for a class of dependent oriented percolation.

\subsection{Some properties of the surviving branching random walk}
\label{tropdur}
We fix $\gamma$ such that the probability of survival $\P^\gamma(\tau^0=+\infty)$ is positive.
At first, we prove that survival implies the explosion of the number of particles.

\begin{lemme}
\label{caexplose1}  
For every finite initial configuration $A$,
$$\P^{\gamma}(\tau^A=+\infty, \; \lim_{t \to +\infty} |\eta_{t}^{A}|=+\infty)=\P^{\gamma}(\tau^A=+\infty).$$
\end{lemme}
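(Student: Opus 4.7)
The plan is to show that survival forces the number of particles to blow up, by combining a uniform one-step extinction estimate for bounded configurations with a Borel--Cantelli / strong Markov argument along the times at which the configuration is small. Since $|\eta_t^A|$ is integer-valued,
\[
\Bigl\{\lim_{t\to+\infty}|\eta_t^A|=+\infty\Bigr\}^c=\bigcup_{k\ge 1}\bigl\{|\eta_t^A|\le k\text{ for infinitely many }t\bigr\},
\]
so it is enough to fix $k\ge 1$ and prove $\P^\gamma(\tau^A=+\infty,\ |\eta_t^A|\le k\ \text{i.o.})=0$.

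The key estimate is a lower bound $\pi_k>0$, depending only on $k$ and $\gamma$, for the conditional probability that any configuration $\eta$ with $1\le|\eta|\le k$ is succeeded by the empty configuration one step later. Conditionally on the past and on the environment at time $t$, each of the $\eta(x)$ particles sitting at $x$ has, independently of the others, probability $q_{t,x}(0)$ of leaving no offspring; integrating out the independent field $(q_{t,x})_x$ gives the one-step extinction probability
\[
\prod_{x:\eta(x)>0}\E^\gamma\bigl[q_{0,0}(0)^{\eta(x)}\bigr],
\]
by spatial independence. Assumption~\eqref{hyp2} supplies $\epsilon>0$ with $p:=\P^\gamma(q_{0,0}(0)\ge\epsilon)>0$, so each factor is at least $p\epsilon^k>0$; since at most $k$ sites can be occupied, the whole product is bounded below by $\pi_k:=(p\epsilon^k)^k$.

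To close the argument, I would introduce the iterated stopping times $\sigma_0=0$ and $\sigma_{j+1}=\inf\{t>\sigma_j:1\le |\eta_t^A|\le k\}$; on the event of interest all $\sigma_j$ are finite. The $\mathcal F_t$-measurability of $\eta_{t+1}^A$ makes each $\sigma_j$ a stopping time in the natural filtration, and the i.i.d.\ structure of the environment ensures that the data indexed by times $\ge\sigma_j$ is independent of the information accumulated up to $\sigma_j$, so the one-step estimate yields $\P^\gamma(\eta_{\sigma_j+1}^A=0\mid\mathcal F_{\sigma_j-1})\ge\pi_k$ on $\{\sigma_j<+\infty\}$. The conditional Borel--Cantelli lemma applied along this sequence of stopping times then forces $\eta_{\sigma_j+1}^A=0$ for some finite $j$ almost surely, contradicting $\tau^A=+\infty$. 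I expect the main, though mild, obstacle to be the careful bookkeeping of $\sigma$-algebras and the strong Markov property at the $\sigma_j$ in this time-inhomogeneous setting; the probabilistic content is otherwise elementary.
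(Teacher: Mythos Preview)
Your argument is correct. Both your proof and the paper's hinge on the same key estimate: when the total population is at most $k$, the conditional probability of extinction in one step is bounded below by some $\pi_k>0$ (your $(p\epsilon^k)^k$, the paper's $(\varepsilon_0\alpha_0)^k$). The difference is in how this estimate is converted into the conclusion. You introduce the successive return times $\sigma_j$ to $\{1\le|\eta|\le k\}$ and run a geometric/Borel--Cantelli argument along them, using the strong Markov property afforded by the i.i.d.\ time layers. The paper instead writes, for every deterministic time $s$,
\[
\P^\gamma(\tau^A<+\infty\mid\mathcal F_{s-1})\ge(\varepsilon_0\alpha_0)^{|\eta_s^A|},
\]
and invokes L\'evy's martingale convergence theorem: the left-hand side converges a.s.\ to $\1_{\{\tau^A<+\infty\}}$, so on the survival event it tends to $0$, forcing $|\eta_s^A|\to+\infty$. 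This bypasses stopping times entirely and is a couple of lines shorter; your route is equally valid and perhaps more transparent about where the independence is used, at the cost of the bookkeeping you flagged.
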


\begin{proof} Let $A$ be a fixed finite initial configuration, and $N$ be a fixed positive integer. By assumption \eqref{hyp2}, there exist $\varepsilon_0, \alpha_0>0$ such that
$$\P^\gamma(q_{0,0}(0)>\varepsilon_0)>\alpha_0.$$
By blocking the progenies of all living particles at time $s$, we see that, 
\begin{align*}
\forall s>1 \quad \P^\gamma \left( \tau^{A}<+\infty \; | \; \mathcal{F}_{s-1} \right)
& \ge  \varepsilon_0^{|\eta^A_s|} \alpha_0^{|\underline{\eta}^A_s|} \ge (\varepsilon_0 \alpha_0)^{|\eta^A_s|}.
\end{align*}
By the martingale convergence theorem, $\displaystyle \lim_{s \to + \infty} \P^{\gamma} \left( \tau^{A}<+\infty \; | \; \mathcal{F}_{s-1} \right)=\1_{\{\tau^{A}<+\infty\}}$. So on the event $\{\tau^{A}=+\infty\}$, $\displaystyle \lim_{s \to + \infty} |\eta_{s}^{A}|=+\infty$. 
\end{proof}

To exploit the independence properties of the environment, we need particles to sit on distinct sites; the fact that the number of particles at time $t$ explodes when the BRWRE survives does not directly ensure that the number of occupied sites also explodes. It will be a by-product of our construction, but we are not able to prove it at this stage. So we need to work separately under the two complementary assumptions:
\begin{align*}
 (H)&  \quad \P^\gamma (q_{0,0}(0)=1)> 0, \\
  (H^c)& \quad \P^\gamma (q_{0,0}(\N^*)>0)=1.
\end{align*}

Under Assumption~$(H)$, we prove that survival implies the explosion of the number of occupied sites:

\begin{lemme}
\label{caexplose2} 
Assume $(H)$ is fulfilled. 
For every finite initial configuration $A$, 
$$\P^\gamma(\tau^A=+\infty, \; \lim_{t \to +\infty} |\underline{\eta}_{t}^{A}|=+\infty)=\P^{\gamma}(\tau^A=+\infty).$$
\end{lemme}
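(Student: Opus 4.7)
The plan is to mirror closely the proof of Lemma~\ref{caexplose1}, replacing the bound involving the total number of particles by a sharper bound using only the number of occupied sites. By assumption $(H)$, set $\alpha = \P^\gamma(q_{0,0}(0)=1) > 0$. The key observation is that if, at time $s$, every occupied site happens to carry the fully killing distribution $\delta_0$, then all particles alive at time $s$ have zero offspring almost surely, and the process is extinct by time $s+1$. This is the analogue, at the level of sites rather than particles, of the bound used previously.

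To make this quantitative, I would rely on the explicit construction of the BRWRE given in the proof of Lemma~\ref{LemFKG}. The set of occupied sites $\underline{\eta}_s^A$ is $\mathcal{F}_{s-1}$-measurable, while the family $(q_{s,x})_{x \in \Zd}$ is independent of $\mathcal{F}_{s-1}$ with i.i.d.\ coordinates distributed as $\gamma$. Enforcing simultaneously $q_{s,x}(0)=1$ for every $x \in \underline{\eta}_s^A$ therefore happens with conditional probability at least $\alpha^{|\underline{\eta}_s^A|}$ given $\mathcal{F}_{s-1}$, and yields extinction at the next step. Hence
$$\P^\gamma\bigl(\tau^A < +\infty \,\big|\, \mathcal{F}_{s-1}\bigr) \ge \alpha^{|\underline{\eta}_s^A|} \quad \text{on } \{\underline{\eta}_s^A \ne \varnothing\}.$$

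To conclude, I would apply the martingale convergence theorem to the closed martingale $\E^\gamma[\1_{\{\tau^A<+\infty\}} \mid \mathcal{F}_{s-1}]$, whose almost sure limit equals $\1_{\{\tau^A < +\infty\}}$. On $\{\tau^A=+\infty\}$, this limit vanishes, which combined with the preceding inequality forces $\alpha^{|\underline{\eta}_s^A|} \to 0$, that is $|\underline{\eta}_s^A| \to +\infty$. Since $\alpha \in (0,1)$, this is exactly the desired statement.

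I do not expect any real obstacle: the whole argument is a direct site-level variant of the particle-level bound already written for Lemma~\ref{caexplose1}, and the only mildly delicate point — the conditional independence of $(q_{s,x})_{x \in \Zd}$ from $\mathcal{F}_{s-1}$, so that one may prescribe the values of $q_{s,x}$ at the random sites of $\underline{\eta}_s^A$ — is built into the construction. The reason why this clean argument is unavailable without assumption $(H)$, and why the case $(H^c)$ must be treated later by other means, is precisely that under $(H^c)$ no single environment value can kill a whole population in one step, so one cannot convert a bound on $|\underline{\eta}_s^A|$ into a uniform extinction probability.
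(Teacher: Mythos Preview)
Your proposal is correct and follows essentially the same argument as the paper: under $(H)$ you bound $\P^\gamma(\tau^A<+\infty\mid\mathcal{F}_{s-1})$ from below by $\alpha^{|\underline{\eta}^A_s|}$ (the paper writes $\varepsilon_1$ for your $\alpha$) by forcing $q_{s,x}(0)=1$ at every occupied site, and then invoke the martingale convergence theorem exactly as in Lemma~\ref{caexplose1}. Your additional remarks on the measurability of $\underline{\eta}_s^A$ with respect to $\mathcal{F}_{s-1}$ and on why the argument breaks down under $(H^c)$ are accurate and make explicit what the paper leaves implicit.
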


\begin{proof} Let $A$ be a fixed finite initial configuration, and $N$ be a fixed positive integer. Under $(H)$
$$\exists \varepsilon_1>0 \quad \P^\gamma (q_{0,0}(0)=1) \ge \varepsilon_1.$$
By blocking the progenies of all occupied sites at time $s$, we see that, 
\begin{align*}
\forall s \ge 1 \quad \P^\gamma \left( \tau^{A}<+\infty \; | \; \mathcal{F}_{s-1} \right)
& \ge  \varepsilon_1^{|\underline{\eta}^A_s|} .
\end{align*}
By the martingale convergence theorem, $\displaystyle \lim_{s \to + \infty} \P^{\gamma} \left( \tau^{A}<+\infty \; | \; \mathcal{F}_{s-1} \right)=\1_{\{\tau^{A}<+\infty\}}$. So on the event $\{\tau^{A}=+\infty\}$, $\displaystyle \lim_{s \to + \infty} |\underline{\eta}^A_s|=+\infty$. 
\end{proof}

Besides, under Assumption~$(H^c)$, we prove that if we start with many particles sitting on the same site, there is a large probability that after some time many sites are occupied.
We denote by $(N)$ the initial configuration where $N$ particles sit on $0$.
Formally, we have $(N)=(N\delta_0(x))_{x\in\Zd}$.

\begin{lemme}
\label{casetend}
Assume $(H^c)$ is fullfilled. Let $n$ be a fixed even integer. 
Set $B_n=\{0,\dots,2n\}\times\{-n,\dots,n\}^{d-1}$.
Then
$\displaystyle \lim_{N \to +\infty} \P^\gamma(ne_1+A_n \subset {}_{B_n}\eta^{(N)}_{2n})=1.$
\end{lemme}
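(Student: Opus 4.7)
The plan is to reduce to a single-particle computation via the conditional-on-environment independence of the $N$ initial particles. In the coupling of the proof of Lemma~\ref{LemFKG}, each particle uses its own independent $(D_{t,x,k},U_{t,x,k})$, so conditionally on $\mathbf{q}$, the truncated $N$-particle process decomposes as the sum of $N$ i.i.d.\ copies of the truncated single-particle process starting from $\delta_0$. Setting $p_y(\mathbf{q}):=\P^\gamma\bigl({}_{B_n}\eta^{\delta_0}_{2n}(y)\ge 1\,\big|\,\mathbf{q}\bigr)$, this yields
$$\P^\gamma\bigl({}_{B_n}\eta^{(N)}_{2n}(y)=0 \,\big|\, \mathbf{q}\bigr) = \bigl(1-p_y(\mathbf{q})\bigr)^N.$$
If I can prove $p_y(\mathbf{q})>0$ almost surely for every $y$ in the finite set $ne_1+A_n$, bounded convergence gives $\P^\gamma({}_{B_n}\eta^{(N)}_{2n}(y)=0)\to 0$ as $N\to\infty$, and a union bound over $y\in ne_1+A_n$ completes the proof.

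To show $p_y(\mathbf{q})>0$ a.s., I would construct, for each $y=ne_1+z$ with $z\in A_n$, a deterministic nearest-neighbor path $\pi^y=(\pi^y_0,\ldots,\pi^y_{2n})$ of length exactly $2n$, starting at $0$, ending at $y$, and staying inside $B_n$. The endpoint lies in $B_n$ because $y_1=n+z_1\in [0,2n]$ and $y_i=z_i\in [-n,n]$ for $i\ge 2$. I would build $\pi^y$ by walking the first $n$ steps in direction $+e_1$ from $0$ to $ne_1$, then using the remaining $n$ steps to move from $ne_1$ to $y$. The second leg is feasible: $\|y-ne_1\|_1=\|z\|_1\le n$, and $n-\|z\|_1$ is even (since $n$ is even by hypothesis and $\|z\|_1\equiv \sum_i z_i\equiv 0\pmod 2$ for $z\in A_n$), so any parity deficit is absorbed by back-and-forth moves along $\pm e_1$, which stay in $B_n$.

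Forcing a single initial particle to follow $\pi^y$ step by step while producing at least one offspring at each time $u\in\{0,\ldots,2n-1\}$ yields the lower bound
$$p_y(\mathbf{q}) \ge (2d)^{-2n} \prod_{u=0}^{2n-1} q_{u,\pi^y_u}(\N^*).$$
Under Assumption $(H^c)$ we have $q_{u,x}(\N^*)>0$ for $\P^\gamma$-a.e.\ $\mathbf{q}$ and every $(u,x)$, so this finite product is strictly positive a.s., giving $p_y(\mathbf{q})>0$ a.s. The only mildly delicate point is this path construction together with the conditional-on-environment independence used above; what remains is a routine bounded-convergence argument and a union bound.
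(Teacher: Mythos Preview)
Your argument is correct and takes a somewhat different route than the paper's. The paper first restricts to a ``fertile'' environment event $F$ on which every $q_{t,x}(\N^*)$ in the relevant box exceeds a fixed $\eta>0$, then couples with the deterministic offspring law $\gamma_0=\eta\delta_1+(1-\eta)\delta_0$; only in this non-random environment does it invoke independence of the $N$ particles. You instead use the branching property directly in the quenched model: conditionally on $\mathbf{q}$, the $N$ initial particles evolve independently, so $\P^\gamma\bigl({}_{B_n}\eta^{(N)}_{2n}(y)=0\mid\mathbf{q}\bigr)=(1-p_y(\mathbf{q}))^N$, and $(H^c)$ guarantees $p_y(\mathbf{q})>0$ a.s.\ via your explicit length-$2n$ path in $B_n$. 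This is cleaner---no $\varepsilon$--$\eta$ bookkeeping, no stochastic domination step---at the cost of invoking dominated convergence over the random environment rather than a direct computation in a fixed one. The paper's reduction, on the other hand, gives a uniform (environment-independent) lower bound on the single-particle success probability on $F$, which some readers may find more concrete.

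One small point: the coupling in Lemma~\ref{LemFKG} indexes particles by their rank at each site and time, not by lineage, so it does not literally exhibit the $N$-particle process as a sum of $N$ single-particle copies. What you actually need is the quenched branching property (a statement about conditional laws), which holds for any branching random walk in a fixed environment and is independent of that particular coupling; your conclusion is unaffected.
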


\begin{proof} Fix $n \ge1$ and $\alpha>0$. Under $(H^c)$, 
\begin{equation} 
\label{epsiloneta}
\forall \varepsilon >0 \quad \exists \eta>0 \quad \P^\gamma (q_{0,0}(\N \backslash \{0\})\ge \eta) \ge 1-\varepsilon/2.
\end{equation}
We say that the environment $q$ is fertile in the box $B_n\times \{0,\dots,2n\}$ if 
$$\forall x \in B_n \quad \forall t \in \{0,\dots,2n\} \quad q_{x,t}(\N \backslash \{0\}) \ge \eta;$$
we denote this event by $F$. Thus, with $\varepsilon$ and $\eta$ satisfying \eqref{epsiloneta},
$$\P^\gamma(F) \ge (1-\varepsilon)^{(4n+1)^d(2n+1)}.$$
We choose $\varepsilon>0$ such that $\P^\gamma(F) \ge 1-\alpha/2$, and take the corresponding $\eta$ given by~\eqref{epsiloneta}. Now, as $\P^\gamma(ne_1+A_n \subset \eta^{(N)}_{2n}) \ge \P^\gamma(ne_1+A_n \subset \eta^{(N)}_{2n}|F)(1-\alpha/2)$, we restrict ourselves to environments in $F$. For an environment $q$ in $F$, each coordinate $q_{x,t}$, $x \in B_n$, $t \in \{0,\dots,2n\}$  stochastically dominates the fixed law $\gamma_0=\eta \delta_1+(1-\eta) \delta_0$. Thus by coupling, it is sufficient to prove that 
\begin{equation}
\label{limiteN}
\lim_{N \to +\infty} \P^{\gamma_0}(ne_1+A_n \subset {}_{B_n}\eta^{(N)}_{2n})=1.
\end{equation}
Let now $x \in ne_1+A_n$ be fixed. As $\|x\|_1\le 2n$,
$$\P^{\gamma_0}(x \in  {}_{B_n}\eta^{(1)}_{2n}) \ge \left( \frac{\eta}{2d} \right)^{2n}>0.$$
 As the environment is non-random, the behavior of distinct particles are independent under $\P^{\gamma_0}$. Thus, 
 $$\lim_{N \to +\infty} \P^{\gamma_0}(x \in {}_{B_n}\eta^{(N)}_{2n})\ge \lim_{N \to +\infty} 1-\left( 1-\left( \frac{\eta}{2d} \right)^{2n}\right)^N=1,$$
 which proves \eqref{limiteN}; this ends the proof.
\end{proof}

A last lemma ensures that starting from $A_{n}$ with $n$ large, the survival probability  is large:
\begin{lemme}
\label{lem00} 
$\displaystyle \lim_{n \to + \infty} \P^\gamma \left (\tau^{A_{n}}=+\infty \right)=1$.
\end{lemme}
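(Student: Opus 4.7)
My plan is to establish the lemma by coupling the BRWRE from $A_n$ with $|A_n|$ BRWREs launched individually from each site of $A_n$, rendered conditionally independent once the environment is fixed, and then to invoke an ergodic argument on the environment field.

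First, I would upgrade the construction of Lemma \ref{LemFKG} so that each particle (rather than each triple $(t,x,k)$) carries its own private uniform random variables for displacement and offspring. Since those variables are iid uniform, this relabelling leaves the joint law of $(\eta^{A_n}_t)$ unchanged. Under this enriched construction, the BRWRE from $A_n$ decomposes as
$$\eta^{A_n}_t \;=\; \sum_{x\in A_n} \tilde{\eta}^{(x)}_t,$$
where each $\tilde{\eta}^{(x)}$ is a BRWRE from $\delta_x$ in the same environment $q$, and the lineages $(\tilde{\eta}^{(x)})_{x\in A_n}$ are \emph{conditionally independent given} $q$. In particular $\{\tau^{A_n}<+\infty\}=\bigcap_{x\in A_n}\{\tilde{\tau}^{(x)}<+\infty\}$, and
$$\P^\gamma(\tau^{A_n} < +\infty \mid q) \;=\; \prod_{x\in A_n} g(x,q),\qquad g(x,q):=\P^\gamma(\tau^{\delta_x}<+\infty \mid q).$$

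Set $\beta:=\P^\gamma(\tau^{\delta_0}=+\infty)>0$, which is the standing assumption of the section. Translation invariance of the environment law yields $g(x,q)=g(0,\theta_x q)$ for the spatial shift $\theta_x$, and $\E^\gamma[g(0,q)]=1-\beta<1$. Hence $g(0,q)<1$ on an event of positive probability, which forces $\E^\gamma[\log g(0,q)]\in[-\infty,0)$. The environment being iid, the $\Zd$-action by spatial shifts is ergodic, and $(A_n)$ is a F\o lner sequence, so the multidimensional Birkhoff ergodic theorem (with a truncation of $\log g$ at $-M$ and $M\to+\infty$ when $\E^\gamma[\log g]=-\infty$) gives
$$\frac{1}{|A_n|}\sum_{x\in A_n}\log g(x,q) \;\longrightarrow\; \E^\gamma[\log g(0,q)] \;<\; 0 \quad \text{a.s.,}$$
whence $\prod_{x\in A_n}g(x,q)\to 0$ almost surely. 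Bounded convergence (the product is between $0$ and $1$) then gives
$$\P^\gamma(\tau^{A_n}<+\infty)\;=\;\E^\gamma\!\left[\prod_{x\in A_n}g(x,q)\right]\;\longrightarrow\; 0,$$
which is the desired conclusion.

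The main obstacle is the conditional-independence coupling: one must carefully justify that re-pooling the uniform variables into per-particle families leaves the distribution of $(\eta^{A_n}_t)$ intact, so that the superposition formula really realises the BRWRE from $A_n$ and, crucially, allows the product formula for $\P^\gamma(\tau^{A_n}<+\infty\mid q)$. Once this is in place, the remainder is a clean ergodic-theoretic argument, with the only minor nuisance being the case $\E^\gamma[\log g(0,q)]=-\infty$, handled by the routine truncation above.
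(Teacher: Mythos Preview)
Your argument is correct and follows a genuinely different route from the paper's. The paper's proof is a three-line $0$--$1$ argument: by monotonicity in the initial configuration, $\P^\gamma(\tau^{A_{2n}}=+\infty)\uparrow\P^\gamma(\tau^{A_\infty}=+\infty)$ with $A_\infty=\{x:\sum_i x_i\in 2\Z\}$; the event $\{\tau^{A_\infty}=+\infty\}$ is invariant under the spatial shift $x\mapsto x+2e_1$, hence has probability $0$ or $1$ by ergodicity of the i.i.d.\ environment, and since it contains $\{\tau^{0}=+\infty\}$ it has probability~$1$.

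Your approach instead exploits the branching property: conditionally on $q$ the lineages issued from the points of $A_n$ are independent, the quenched extinction probability factorises as $\prod_{x\in A_n}g(x,q)$, and a spatial ergodic theorem on $\log g(\cdot,q)$ drives this product to $0$ almost surely. This is longer but yields more than the paper's proof: it shows that the \emph{quenched} extinction probability $\P^\gamma(\tau^{A_n}<+\infty\mid q)$ decays almost surely exponentially in $|A_n|$, at rate $\E^\gamma[\log g(0,q)]$. One small correction: $(A_n)$ is \emph{not} a F\o lner sequence for the full $\Zd$-action---a shift by $e_1$ sends $A_n$ entirely into the odd sublattice, so $|(e_1+A_n)\triangle A_n|=2|A_n|$---but it \emph{is} F\o lner in the even sublattice $L=\{x:\sum_i x_i\in 2\Z\}$, and what your ergodic step actually needs is ergodicity of the $L$-action, which holds because the environment is i.i.d. Finally, your caution about re-pooling the uniform variables is legitimate, though one can bypass it by observing that under the quenched law the process is an ordinary (inhomogeneous) branching random walk, for which the factorisation of the extinction probability over the initial particles is immediate from the branching property.
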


\begin{proof}
By monotonicity, $\displaystyle \lim_{n \to + \infty} \P^\gamma \left (\tau^{A_{2n}}=+\infty \right)=\P^\gamma \left (\tau^{A_\infty}=+\infty \right)$, with $$A_\infty=\{x \in \mathbb Z^d: \;  \sum_{i=1}^n x_i \in 2\Z\}.$$
The event $\{\tau^{A_\infty}=+\infty\}$ is invariant under the spatial translation $x\mapsto x+2e_1$, so by ergodicity, it is either null or full. Since  $\P^{\gamma}(\tau^{A_\infty} =+\infty)\ge \P^\gamma (\tau^{0} =+\infty)>0$, it is equal to $1$.
\end{proof}





\begin{lemme}
\label{lem0} 
Recall that
$B_n=\{0,\dots,2n\}\times\{-n,\dots,n\}^{d-1}$.
For every $n\ge1$, there exists $h>0$ such that 
$$\P^\gamma(A_n \subset \; _n\underline{\eta}^0_{h})>0 \text{ and } \P^\gamma(ne_1+A_n \subset \; _{B_n}\underline{\eta}^0_{h})>0.$$
\end{lemme}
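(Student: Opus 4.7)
The plan is to exhibit, for an appropriately chosen $h$, an explicit event of positive probability on which the truncated BRWRE starting from one particle at the origin occupies the required target set at time $h$; since extra particles only enlarge the set of occupied sites, the desired inclusion will follow. The same $h$ will work for both parts of the lemma by taking the maximum of the two values produced, so I describe the construction once for a generic target set $F$ (which will be $A_n$, then $ne_1+A_n$) sitting inside a truncation box $B$ (which will be $\{-n,\dots,n\}^d$, then $B_n$).

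First I will extract from~\eqref{hyp2} a uniform positive lower bound on the probability of producing at least two offspring. Because $\P^\gamma(q_{0,0}(0)+q_{0,0}(1)<1)>0$, we have $\P^\gamma\bigl(\sum_{k\ge 2}q_{0,0}(k)>0\bigr)>0$, and countable additivity together with inner regularity yield an integer $k_0\ge 2$ and constants $\delta,\alpha>0$ such that $\P^\gamma(q_{t,x}(k_0)>\delta)\ge\alpha$. On this environmental event, any given particle sitting at $(t,x)$ produces exactly $k_0$ offspring with conditional probability $\ge\delta$ through the independent uniforms $U_{t,x,k}$, and its (single) displacement equals any prescribed direction $v\in V$ with conditional probability $1/(2d)$.

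Next I will fix $h$ of the same parity as $n$ and large enough (for instance $h=2n+2\lceil\log_2|F|\rceil$) that the following deterministic \emph{blueprint} exists: for every $x\in F$, a lattice path $\pi_x$ of length $h$ from $0$ to $x$ contained in $B$, all these paths being glued along their common prefixes into a finite rooted tree $\mathcal T\subset[0,h]\times B$. A subtlety is that each particle can only move in one direction, so a space-time vertex of $\mathcal T$ from which $m$ sub-trees fan out at the next time step must carry at least $m$ distinct particles of the BRWRE; these are produced one step earlier by the $k_0$-offspring event at the common parent. Allowing two time units per binary split (together with the $n$ steps needed to reach the far sites of $F$) is what dictates the order of $h$ above; the exact value is inessential.

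I then define $\mathcal E$ as the intersection of (i)~$\{q_{t,x}(k_0)>\delta\}$ at every vertex $(t,x)$ of $\mathcal T$; (ii)~for every blueprint particle sitting at $(t,x)$ with particle-index $k\in\N^*$, the event that $U_{t,x,k}$ lies in the subinterval producing exactly $k_0$ offspring; (iii)~for that same triple $(t,x,k)$, the event $D_{t,x,k}=v_{t,x,k}$, where $v_{t,x,k}\in V$ is the displacement prescribed by the blueprint. Since $\mathcal T$ is finite, $\mathcal E$ is a finite intersection of mutually independent events of positive probability (the environmental events are i.i.d.\ across space-time, and conditionally on the environment the $U$'s and $D$'s are independent), hence $\P^\gamma(\mathcal E)>0$. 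On $\mathcal E$ the blueprint is realized entirely inside $B$ and so $F\subset{}_B\underline{\eta}_h^0$, finishing the proof. The only delicate point will be the combinatorial embedding of $\mathcal T$ respecting the single-displacement-per-particle rule; all the probabilistic content is packaged in the constant $\delta$ extracted from~\eqref{hyp2}.
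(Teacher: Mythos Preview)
Your argument is correct and follows essentially the same plan as the paper: extract from~\eqref{hyp2} an environmental event on which branching is possible, then exhibit a finite positive-probability scenario in which a single particle multiplies and then spreads to cover the target set inside the box. The paper packages this slightly differently: rather than prescribing the individual variables $(U_{t,x,k},D_{t,x,k})$, it conditions on $\{q_{t,x}(0)+q_{t,x}(1)\le 1-\eta\}$ throughout the box and observes that on this event the process stochastically dominates the BRWRE in the \emph{deterministic} environment $\gamma_1=\eta\delta_2+(1-\eta)\delta_0$; it then runs the two-phase scheme (double near the origin for $k$ steps with $2^k\ge|A_n|$, then disperse in $2n$ steps) in that fixed environment, where particles are independent and the bound is a one-line product. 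Your direct prescription avoids the domination step but forces you to track particle indices carefully (the $k$-th blueprint particle must really be the one using $U_{t,x,k}$); this is fine by induction and monotonicity of $p\mapsto E_{t,x}(v,p)$, but the paper's reduction sidesteps that bookkeeping entirely. One small slip: your parity choice ``$h$ of the same parity as $n$'' is right for the target $ne_1+A_n$ but not for $A_n$ itself (whose points all have even coordinate sum, forcing $h$ even); for even $n$ --- the only case used later --- a common even $h$ works for both targets.
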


\begin{proof}
By~\eqref{hyp2}, there exists $\eta>0$ and $\epsilon>0$ such that $$\P^{\gamma}(q_{0,0}(0)+q_{0,0}(1)\le 1-\eta)\ge \epsilon.$$
Let $k$ be even with $2^k\ge |A_n|$ and let $h=k+2n$.
Note $$G=\left\{\forall (x,t)\in B_n\times \{0,\dots,k+2n\}\quad q_{0,0}(0)+q_{0,0}(1)\le 1-\eta\right\}.$$
$\P^{\gamma}(G)\ge \epsilon^{|B_n\times \{0,\dots,h\}|}>0$.
On the event $G$, the environment allows each individual to have more than one daughter with probability $\eta$.
For an environment $q$ in $F$, each coordinate $q_{x,t}$, $x \in B_n$, $t \in \{0,\dots,h\}$  stochastically dominates the fixed law $\gamma_1=\eta\delta_2+(1-\eta)\delta_0$, so 
for each event $E$, we have $$\P^{\gamma}(G)\ge\epsilon^{|B_n\times \{0,\dots,h\}|} \P^{\gamma_1}(G).$$
Thus, it is sufficient to prove that $\P^{\gamma_1}(A_n \subset \; _n\underline{\eta}^0_{h})>0$ and  $\P^{\gamma_1}(ne_1+A_n \subset \; _{B_n}\underline{\eta}^0_{h})>0$.

We only prove the second inequality; the first one is similar.

Under $\P^{\gamma_1}$, the probability that there is at least $|A_n|$ particles at the origin at time $k$ whose ancesters never left $\{0;e_1\}$ is at least $(\frac{\eta}{2d})^{2^{k+1}-1}$.
Then, there is a probability at least $(\frac{\eta}{2d})^{2n|A_n|}$ that their progeny fills $ne_1+A_n$ in $2n$ steps, with trajectories remaining inside $\{-2n,\dots 2n\}^d$.
Finally, $$\P^{\gamma_1}(ne_1+A_n \subset \; _{B_n}\underline{\eta}^0_{h})\ge (\frac{\eta}{2d})^{2^{k+1}-1}(\frac{\eta}{2d})^{2n|A_n|}>0.$$
The other proof is similar.
\end{proof}

\subsection{From survival to local events}
 
\begin{lemme}
\label{lem2}
For every finite $A \subset \Zd$, for every positive integer $N$,
$$\lim_{t \to + \infty} \lim_{L \to + \infty} \P \left (|_{L}\eta_{t}^{A}| \ge N\right)= \P \left (\tau^A=+\infty \right).$$
Under the extra assumption $(H)$, 
$$\lim_{t \to + \infty} \lim_{L \to + \infty} \P \left (|_{L}\underline{\eta}_{t}^{A}| \ge N\right)= \P \left ( \tau^A=+\infty \right).$$
\end{lemme}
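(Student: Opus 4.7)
The plan is to reduce the statement to the explosion lemmas of Subsection~\ref{tropdur} by a straightforward exchange-of-limits argument. The inner limit in $L$ is essentially automatic from the finite speed of propagation of the BRWRE, and the outer limit in $t$ is handled by the dichotomy between extinction and survival.

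First, I would fix $A$, $t$ and $N$, and observe that starting from a finite configuration $A$, no particle alive at time $t$ can lie further from the origin than $\max_{x\in A}\|x\|_\infty + t$, since each displacement is a unit vector of $V$. In particular, as soon as $L \ge \max_{x\in A}\|x\|_\infty + t$, the truncation on $\{-L,\dots,L\}^d$ discards nothing up to time $t$, and we have the pathwise identity ${}_L\eta^A_t = \eta^A_t$ (and similarly $_L\underline{\eta}^A_t = \underline{\eta}^A_t$). Consequently, for each fixed $t$,
$$\lim_{L\to+\infty} \P^\gamma\bigl(|{}_L\eta^A_t| \ge N\bigr) = \P^\gamma\bigl(|\eta^A_t| \ge N\bigr),$$
and the analogous equality holds with $\underline{\eta}$ in place of $\eta$.

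Next, I would let $t\to+\infty$. On $\{\tau^A < +\infty\}$, by definition $|\eta^A_t| = 0$ for all $t$ large enough, so $\1\{|\eta^A_t|\ge N\} \to 0$ a.s. On $\{\tau^A = +\infty\}$, Lemma~\ref{caexplose1} gives $|\eta^A_t| \to +\infty$ a.s., so the indicator equals $1$ eventually. Bounded convergence thus yields
$$\lim_{t\to+\infty} \P^\gamma\bigl(|\eta^A_t| \ge N\bigr) = \P^\gamma(\tau^A = +\infty),$$
which combined with the previous display proves the first assertion.

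For the second assertion the same two steps apply with $\underline{\eta}$ in place of $\eta$, invoking Lemma~\ref{caexplose2} instead of Lemma~\ref{caexplose1}. This is the one spot where hypothesis $(H)$ is genuinely used: without it we do not know that the number of occupied sites explodes on the survival event (particles could in principle pile up on a bounded set), and the pointwise convergence $\1\{|\underline{\eta}^A_t|\ge N\} \to \1\{\tau^A = +\infty\}$ would fail. Apart from selecting the correct explosion lemma, I do not expect any serious obstacle; the content of the lemma is really just the finite speed of propagation together with Lemmas~\ref{caexplose1} and~\ref{caexplose2}.
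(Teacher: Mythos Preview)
Your proof is correct and follows essentially the same approach as the paper: you use the finite speed of propagation to identify ${}_L\eta^A_t$ with $\eta^A_t$ for $L$ large (the paper writes this as $\eta^A_t=\bigcup_L {}_L\eta^A_t$), and then invoke Lemmas~\ref{caexplose1} and~\ref{caexplose2} together with dominated convergence for the limit in $t$. The only cosmetic difference is that the paper phrases the second step via $\P^\gamma(\tau^A=+\infty,\,|\eta^A_t|\ge N)\to\P^\gamma(\tau^A=+\infty)$, whereas you split explicitly into the extinction and survival events; the content is identical.
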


\begin{proof} Let $A$ be a fixed finite subset of $\Zd$, and $N$ be a fixed positive integer.  
$$\forall t \in \N \quad \eta_{t}^{A} = 
\bigcup_{L \in \N}   {}_{L}\eta_{t}^{A} .$$
Indeed, the inclusion $\supset$ follows from positivity, and if $L \ge \|A\|_\infty+2t+1$, then for every $s \le t$, 
$\eta_{s}^{A}=\;_{L}\eta_{s}^{A}$. 
Thus
$$\left\{ |\eta_{t}^{A}| \ge N\right\} =
\bigcup_{L \in \N} \left\{ |_{L}\eta_{t}^{A}| \ge N\right\}$$
and thus $\displaystyle \lim_{L \to + \infty} \P \left(|_{L}\eta_{t}^{A}| \ge N\right)= \P \left(|\eta_{t}^{A}| \ge N\right)$. 
But Lemma~\ref{caexplose1} and the dominated convergence theorem ensure that 
$$ \lim_{t \to + \infty} \P^\gamma(\tau^A=+\infty, \; |\eta_{t}^{A}| \ge N) =\P( \tau^{A}=+\infty).$$
In the same way, 
$\displaystyle \lim_{L \to + \infty} \P \left(|_{L}\underline{\eta}_{t}^{A}| \ge N\right)= \P \left(|\underline{\eta}_{t}^{A}| \ge N\right)$.
Under Assumption $(H)$, we use Lemma~\ref{caexplose2} to conclude.
\end{proof}

Then, using the FKG inequality with a classical square root trick, we can ensure that the truncated process at time $t$ contains many points in a prescribed orthant of $\Zd$:

\begin{lemme}
\label{FKG1}
For every positive integers $n,N,t$, for every integer $L >n$,
\begin{align*}
\P^{\gamma} \left (|_{L}\eta_{t}^{A_n}\cap \{0,\dots,L\}^d| \le N\right)^{2^d} 
& \le  \P^{\gamma} \left (|_{L}\eta_{t}^{A_n} | \le N2^d\right), \\
\P^{\gamma} \left (|_{L}\underline{\eta}_{t}^{A_n}\cap \{0,\dots,L\}^d| \le N\right)^{2^d} 
& \le  \P^{\gamma} \left (|_{L}\underline{\eta}_{t}^{A_n} | \le N2^d\right).
\end{align*}
\end{lemme}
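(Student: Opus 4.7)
The approach is the standard square-root trick applied to the FKG inequality from Lemma~\ref{LemFKG}. For each $\sigma \in \{-1,+1\}^d$ define the closed orthant
$$O_\sigma = \{x \in \Zd : \sigma_i x_i \ge 0 \text{ for all } i=1,\dots,d\},$$
so that $O_{(1,\dots,1)} = \N^d$ and the $O_\sigma$ cover $\Zd$ (with overlaps only on coordinate hyperplanes). For each $\sigma$, consider the event
$$B_\sigma = \left\{ \bigl|{}_{L}\eta_{t}^{A_n} \cap O_\sigma \bigr| \le N \right\}.$$

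The plan is in three steps. First, observe that each $\1_{B_\sigma}$ is a non-increasing function of $_{L}\eta_t^{A_n}$: adding particles only increases the count in $O_\sigma$, hence only decreases $\1_{B_\sigma}$. Since Lemma~\ref{LemFKG} (extended to the truncated process, as noted after its proof) yields positive correlation of non-decreasing functionals of $_{L}\eta_t^{A_n}$, it applies equally well to $1-\1_{B_\sigma}$ and hence to products of the $\1_{B_\sigma}$ themselves. Iterating over the $2^d$ events,
$$\P^\gamma\!\left(\bigcap_{\sigma} B_\sigma \right) \ge \prod_{\sigma \in \{-1,+1\}^d} \P^\gamma(B_\sigma).$$

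Second, I will check that all $2^d$ factors are equal. The initial condition $A_n$ is invariant under each reflection $r_\sigma : x \mapsto (\sigma_1 x_1,\dots,\sigma_d x_d)$ (both the condition $\|x\|_1 \le n$ and the parity condition $\sum x_i \equiv 0 \pmod 2$ are preserved). The box $\{-L,\dots,L\}^d$ used for truncation is also invariant. Finally the law $\P^\gamma$ is invariant under the spatial reflections $r_\sigma$, because the environment is i.i.d.\ and the jump distribution is uniform on $V=\{v : \|v\|_1 = 1\}$, hence symmetric under any coordinate reflection. A routine check on the construction $E_{t,x}(v,p)$ of the proof of Lemma~\ref{LemFKG} shows that $r_\sigma({}_{L}\eta_t^{A_n})$ has the same law as ${}_{L}\eta_t^{A_n}$, and that $r_\sigma(O_{(1,\dots,1)}) = O_\sigma$. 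Therefore $\P^\gamma(B_\sigma) = \P^\gamma(B_{(1,\dots,1)})$ for every $\sigma$, which gives
$$\P^\gamma\!\left(\bigcap_\sigma B_\sigma\right) \ge \P^\gamma\!\left(\bigl|{}_{L}\eta_t^{A_n} \cap \{0,\dots,L\}^d\bigr| \le N \right)^{2^d}.$$

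Third, I will bound the total count. Since the $O_\sigma$ cover $\Zd$,
$$\bigl|{}_{L}\eta_t^{A_n}\bigr| \le \sum_{\sigma \in \{-1,+1\}^d} \bigl|{}_{L}\eta_t^{A_n}\cap O_\sigma\bigr|,$$
so on the event $\bigcap_\sigma B_\sigma$ the left-hand side is at most $N 2^d$. Thus $\bigcap_\sigma B_\sigma \subset \{|{}_{L}\eta_t^{A_n}| \le N 2^d\}$, which combined with the previous display yields the first inequality. The second inequality follows by the identical argument, replacing ${}_{L}\eta$ by ${}_{L}\underline{\eta}$: the count $|{}_{L}\underline{\eta}_t^{A_n} \cap O_\sigma|$ is still a non-increasing function of ${}_{L}\underline{\eta}_t^{A_n}$, Lemma~\ref{LemFKG} provides the corresponding FKG, and the reflections act in the same way on the occupied-site field.

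No step is genuinely delicate; the main point requiring care is verifying the symmetry used in step two, i.e.\ that the joint distribution of $({}_{L}\eta_t^{A_n} \cap O_\sigma)_\sigma$ is exchangeable enough for all $2^d$ factors to coincide. Everything else is a direct application of FKG and a union-of-orthants counting argument.
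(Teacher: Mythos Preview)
Your proof is correct and follows essentially the same approach as the paper: symmetry of the model and of $A_n$ to equate the orthant probabilities, FKG applied to the $2^d$ decreasing events, and the covering bound $\sum_\sigma |{}_{L}\eta_t^{A_n}\cap O_\sigma|\ge|{}_{L}\eta_t^{A_n}|$. The paper's version is simply terser, but the argument is the same square-root trick.
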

\begin{proof}
By the symmetries of the model and of $A_n$, the law of the intersection of the process with  a given orthant does not depend on the orthant. With the FKG inequality, we get
$$\P^{\gamma} \left (|_{L}\eta_{t}^{A_n}\cap \{0,\dots,L\}^d| \le N\right)^{2^d} 
 \le \P^{\gamma} \left (\miniop{}{\cap}{\epsilon\in \{+1,-1\}^d}\left\{\left|{}_{L}\eta_{t}^{A_n}\cap \miniop{n}{\prod}{i=1} \epsilon_i\{0,\dots,L\} \right| \le N \right\}\right),$$
which is smaller than $\P^{\gamma} \left (|_{L}\eta_{t}^{A_n} | \le N2^d\right)$. The other proof is similar.
\end{proof}
For every positive integers $n,L,T$ with $L>n$, we define $N^{A_n}(L,T)$ as the number of particles sitting on sites  $(x,t)$ such that 
$t \in \{0,\dots,T\}$, $\|x\|_\infty=L$, $x \in {}_{L}\underline{\eta}_{t}^{A_n}$: this is the number of particles sitting on a given lateral face of the large box. We also define $\underline{N}^{A_n}(L,T)$ as the number of occupied sites $(x,t)$ such that 
$t \in \{0,\dots,T\}$, $\|x\|_\infty=L$, $x \in {}_{L}\underline{\eta}_{t}^{A_n}$: this is the number of occupied sites on a given lateral face of the large box. 
The next point is to ensure that when the BRWRE survives, it must occupy many points and on the top face and on the lateral faces of a large box:

\begin{lemme}
\label{lem3}
For any positive integers $M,N,n$,  for any increasing sequences of integers $(T_j)_{j\ge 1}$ and $(L_j)_{j\ge 1}$,
$$\miniop{}{\lim}{j\to +\infty} \P^{\gamma} \left( 
\begin{array}{c}
N^{A_n}(L_j,T_j,) \\
\le M 
\end{array}\right) \P^{\gamma} \left( 
\begin{array}{c}
|_{L_j}{\eta}_{T_j}^{A_n}| \\
\le N
\end{array}
\right) \le \P^{\gamma} \left( \tau^{A_n}<+\infty \right).$$
Under the extra assumption $(H)$, 
$$\miniop{}{\lim}{j\to +\infty} \P^{\gamma} \left( 
\begin{array}{c}
\underline{N}^{A_n}(L_j,T_j,) \\
\le M 
\end{array}\right) \P^{\gamma} \left( 
\begin{array}{c}
|_{L_j}\underline{\eta}_{T_j}^{A_n}| \\
\le N
\end{array}
\right) \le \P^{\gamma} \left( \tau^{A_n}<+\infty \right).$$
\end{lemme}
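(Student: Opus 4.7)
The plan is to apply the FKG inequality for the truncated process to turn the product into a joint probability, and then reduce the joint probability to $\P^\gamma(\tau^{A_n}<\infty)$ by a Markov/explosion argument. Setting $X_j = N^{A_n}(L_j,T_j)$ and $Y_j = |{}_{L_j}\eta_{T_j}^{A_n}|$, both events $\{X_j \le M\}$ and $\{Y_j \le N\}$ are decreasing functions of the truncated process $({}_{L_j}\eta_t^{A_n})_{0\le t\le T_j}$, hence decreasing functions of the independent underlying field $(E_{t,x})$ from the construction in Lemma \ref{LemFKG}. The truncated version of that lemma, applied to decreasing events via complementation of Holley's inequality, gives
\[
\P^\gamma(X_j \le M)\,\P^\gamma(Y_j \le N) \le \P^\gamma(X_j \le M,\, Y_j \le N).
\]
Splitting the right-hand side by $\{\tau^{A_n}<\infty\}$ versus $\{\tau^{A_n}=\infty\}$ reduces the problem to showing $\P^\gamma(X_j \le M,\,Y_j \le N,\,\tau^{A_n}=\infty)\to 0$ as $j \to \infty$.

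On $\{\tau^{A_n}=\infty\}$, Lemma \ref{caexplose1} gives $|\eta_t^{A_n}|\to\infty$, so $\P^\gamma(\tau^{A_n}=\infty,\,|\eta_{T_j}^{A_n}|\le K)\to 0$ for every $K$. Each particle of $\eta_{T_j}^{A_n}$ is either trapped, with its entire ancestry in $\{-L_j,\dots,L_j\}^d$ and hence counted by $Y_j\le N$, or descends from a first-generation escape event, i.e.\ a child of a truncated boundary particle whose random walk step was outward. By the construction used in Lemma \ref{LemFKG}, each boundary particle moves outward with probability at most $1/2$ and produces in expectation $\E^\gamma[m_{0,0}]$ children, a quantity finite by \eqref{hyp1}. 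Thus conditional on $\{X_j\le M\}$ the expected number of first-generation escape children is at most $M\E^\gamma[m_{0,0}]/2$, and Markov's inequality restricts the number of first-generation escapes to some bounded $R=R(\epsilon)$ with probability at least $1-\epsilon$, uniformly in $j$. Combining this bound with the Markov property of the BRWRE after time $T_j$ (the future evolves as a fresh BRWRE started from $\eta_{T_j}^{A_n}$ in an independent time-slice of the environment) and the strict positivity of the per-lineage extinction probability guaranteed by \eqref{hyp2}, one controls the survival contribution of escape-descendant subtrees and concludes that the residual probability vanishes. The $(H)$-variant of the lemma is handled identically, using $\underline{\eta}$ in place of $\eta$ and invoking Lemma \ref{caexplose2} in place of Lemma \ref{caexplose1}.

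The main obstacle is this last step. Because the offspring distribution is only assumed to have finite mean, a single escape event can spawn a subtree whose expected size grows unboundedly with $T_j$, so the bookkeeping of escape descendants has to be carried out probabilistically via Markov's inequality rather than pointwise. Uniformity is also delicate: the sequences $(L_j,T_j)$ are only required to be increasing, so the argument must cover both the regime $L_j\gg T_j$ (where truncation is essentially irrelevant and the truncated process nearly equals the full one) and the regime $T_j\gg L_j$ (where the truncated process dies in a bounded volume well before $T_j$), as well as all intermediate scalings in which the interplay between escapes and trapped particles is most subtle.
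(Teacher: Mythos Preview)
Your FKG step is correct and matches the paper's. The gap is in the second half, where you claim $\P^\gamma(X_j\le M,\,Y_j\le N,\,\tau^{A_n}=\infty)\to 0$.

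Your argument for this is: bound the number of first-generation escape children by some $R$ (via Markov's inequality on $\{X_j\le M\}$), then invoke ``per-lineage extinction probability'' and the Markov property at time $T_j$. But neither ingredient yields a bound tending to $0$. Bounding first-generation escapes by $R$ does \emph{not} bound the number of escape descendants alive at time $T_j$ --- each of the $\le R$ escape children seeds a fresh BRWRE that may carry arbitrarily many particles by time $T_j$ --- so $|\eta^{A_n}_{T_j}|$ is simply not controlled on $\{X_j\le M,\,Y_j\le N\}$, and you cannot feed anything into Lemma~\ref{caexplose1}. And having at most $N+R$ lineages, each with positive individual extinction probability, only gives a \emph{fixed} positive lower bound on conditional extinction, not a vanishing upper bound on joint survival. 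Your last paragraph concedes that this step is the obstacle, and indeed the route you sketch does not close it.

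The paper avoids trying to bound $|\eta^{A_n}_{T_j}|$ altogether. After the same FKG step it passes to $H_j=\{N^{A_n}(L_j,T_j)+|{}_{L_j}\eta^{A_n}_{T_j}|\le M+N\}$ and argues, in the style of Lemma~\ref{caexplose1}, that on $H_j$ one has $\P^\gamma(\tau^{A_n}<\infty\mid\mathcal{F}_{L_j,T_j})\ge c>0$ for a constant $c$ independent of $j$: block each of the $\le M$ boundary particles using the free environment outside the box, and block the $\le N$ trapped particles using the free environment after time $T_j$. Since the filtrations $\mathcal{F}_{L_j,T_j}$ increase to the full $\sigma$-field, L\'evy's martingale convergence theorem gives $\P^\gamma(\tau^{A_n}<\infty\mid\mathcal{F}_{L_j,T_j})\to\1_{\{\tau^{A_n}<\infty\}}$ a.s., whence $\limsup_j H_j\subset\{\tau^{A_n}<\infty\}$ and therefore $\limsup_j\P^\gamma(H_j)\le\P^\gamma(\tau^{A_n}<\infty)$. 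This $0$--$1$ law step is exactly what converts ``positive per-lineage extinction probability'' into a statement about the limit, and it is the idea missing from your attempt.
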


\begin{proof}
For integers $L,T$, let $\mathcal F_{L,T}=\sigma((q_{s,x},U_{s,x,k},D_{s,x,k})_{0\le s\le T,x\in\{-L,\dots,L\}^d,k\in \N^*})$. Set $k=M+N$. Let $(T_j)_j$ and $(L_j)_j$ be two increasing sequences of integers.
$$H_j=\{N^{A_n}(L_j,T_j)+|_{L_j}{\eta}_{T_j}^{A_n}| \le k\} \quad \text{ and } \quad G=\{\tau^{A_n}<+\infty\}.$$
We proceed as in Lemma~\ref{caexplose1}. Since $H_j \in \mathcal F_{L_j,T_j-1}$, 
we have 
$$\P^{\gamma}(G|\mathcal F_{L_j,T_j}) \ge \1_{H_j}(\varepsilon_0 \alpha_0)^{k}.$$
By the martingale convergence theorem, $\P^{\gamma}(G|\mathcal F_{L_j,T_j})$ almost surely converges to~$\1_{G}$, which implies that
$$\miniop{}{\limsup}{j\to +\infty} H_j \subset G, \; \text{ and thus } \miniop{}{\limsup}{j\to +\infty}\P^{\gamma}(H_j) \le \P\left( \miniop{}{\limsup}{j\to +\infty} H_j\right) \le \P^{\gamma}(G).$$
Using the FKG inequality once again, note that
\begin{align*}
\P^{\gamma}(H_j) & =  \P^{\gamma}(N^{A_n}(L_j,T_j)+|_{L_j}\underline{\eta}_{T_j}^{A_n}|\le M+N) \\
& \ge  \P^{\gamma} \left( N^{A_n}(L_j,T_j) \le M \right) \P^{\gamma} \left( |_{L_j}\underline{\eta}_{T_j}^{A_n}| \le N\right), 
\end{align*} 
which ends the proof.
The proof of the result under $(H)$ is obtained by adapting the proof as Lemma~\ref{caexplose2} is adapted from Lemma~\ref{caexplose1}.
\end{proof}

Exactly as in Lemma~\ref{FKG1}, the FKG inequality and the symmetries of the process allow to control the number of colonized points in a prescribed orthant of a lateral face of the box $\{-L,\dots,L\}^d\times \{0,\dots,T\}$.

For every positive integers $L,T$ and every finite $A \subset \Zd$, we define $N_+^{A}(L,T)$ as the  number of particles sitting on sites $(x,t)$ such that 
$t \in \{0,\dots,T\}$, $x_1=L$, $x_i \ge 0$ for $2 \le i \le d$, $x \in _{L}\underline{\eta}_{t}^{A}$, and $\underline{N}_+^{A}(L,T)$ as the  number of occupied sites $(x,t)$ such that 
$t \in \{0,\dots,T\}$, $x_1=L$, $x_i \ge 0$ for $2 \le i \le d$, $x \in _{L}\underline{\eta}_{t}^{A}$. Then

\begin{lemme}
\label{FKG2}
For every positive integers $n,N,K,t$, for every integer $L \ge n$,
$$\P^{\gamma} \left ( N_+^{A_n}(L,T)\le M\right)^{d2^d} \le 
\P^{\gamma} \left ( N^{A_n}(L,T)\le Md2^d\right).$$
$$\P^{\gamma} \left ( \underline{N}_+^{A_n}(L,T)\le M\right)^{d2^d} \le 
\P^{\gamma} \left ( \underline{N}^{A_n}(L,T)\le Md2^d\right).$$
\end{lemme}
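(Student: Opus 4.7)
The plan is to mirror Lemma~\ref{FKG1}: decompose the lateral surface of the box $\{-L,\dots,L\}^d\times\{0,\dots,T\}$ into $d\cdot 2^d$ congruent pieces, each obtained from the positive face-orthant defining $N_+^{A_n}(L,T)$ by a symmetry of the model, and then combine FKG with symmetry. Concretely, for $i\in\{1,\dots,d\}$, $\epsilon\in\{-1,+1\}$, and $\sigma\in\{-1,+1\}^{\{1,\dots,d\}\setminus\{i\}}$, set
\[
 F_{i,\epsilon,\sigma}=\{(x,t) : t\in\{0,\dots,T\},\; x_i=\epsilon L,\; \sigma_j x_j\ge 0 \text{ for } j\neq i\}.
\]
There are $2d\cdot 2^{d-1}=d\cdot 2^d$ such face-orthants, their union is the full lateral surface, and the region appearing in the definition of $N_+^{A_n}(L,T)$ is $F_{1,+1,(+1,\dots,+1)}$.

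Let $N_{F_{i,\epsilon,\sigma}}^{A_n}(L,T)$ count the particles on occupied sites of $F_{i,\epsilon,\sigma}$. Since $A_n$ is invariant under coordinate permutations and sign flips, and the BRWRE dynamics (the common law $\gamma$ of the $q_{t,x}$, the uniform $U_{t,x,k}$, and the uniform jumps $D_{t,x,k}$ on $V$) shares the same invariance, each $N_{F_{i,\epsilon,\sigma}}^{A_n}(L,T)$ has the same law as $N_+^{A_n}(L,T)$. Moreover each event $\{N_{F_{i,\epsilon,\sigma}}^{A_n}(L,T)\le M\}$ is decreasing in the process $(\eta_s^{A_n})_{s\le T}$, and the FKG inequality of Lemma~\ref{LemFKG} extends without change to non-decreasing functionals of the whole process, since its proof only invokes monotonicity in the i.i.d.\ field $(E_{t,x})$ together with Holley's inequality. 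Applying it to the $d\cdot 2^d$ decreasing events yields
\[
 \P^{\gamma}\!\left(\bigcap_{i,\epsilon,\sigma}\{N_{F_{i,\epsilon,\sigma}}^{A_n}(L,T)\le M\}\right) \ge \P^{\gamma}\bigl(N_+^{A_n}(L,T)\le M\bigr)^{d\cdot 2^d}.
\]

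On the intersecting event every one of the $d\cdot 2^d$ face-orthant counts is bounded by $M$, so their sum is bounded by $Md\cdot 2^d$. Since the $F_{i,\epsilon,\sigma}$ cover the whole lateral surface, $N^{A_n}(L,T)$ is itself dominated by this sum (the overlap of faces along lower-dimensional strata only inflates it), giving $N^{A_n}(L,T)\le Md\cdot 2^d$ and therefore the first inequality. The underlined version is obtained identically, replacing particle counts by occupied-site counts throughout: these remain non-decreasing functionals of the process, so the symmetry and FKG steps go through verbatim. No real obstacle arises; the only point to keep in mind is the face-orthant overlap on lower-dimensional boundary strata, which is harmless because it only makes the sum of the face-orthant counts larger than $N^{A_n}(L,T)$.
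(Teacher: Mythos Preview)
Your proposal is correct and follows exactly the approach the paper intends: the paper's own proof is the single sentence ``This is the same square root trick as in the proof of Lemma~\ref{FKG1},'' and your decomposition into $d\cdot 2^d$ face-orthants, combined with symmetry of $A_n$ and the model plus FKG for decreasing events, is precisely that trick spelled out in full.
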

\begin{proof}
This is the same square root trick as in the proof of Lemma~\ref{FKG1}.
\end{proof}

\subsection{Proof of Proposition~\ref{unpas}}

With the previous lemmas in hand, we can now  prove Proposition~\ref{unpas}.

\subsubsection{First case:  assume that $(H)$ holds.} In that case, we can work with the number of occupied sites. 

Let $\varepsilon>0$ be fixed and $0<\delta<1$ to be chosen later. \\
With Lemma~\ref{lem00}, we first choose a positive even integer $n$ such that 
\begin{equation}
\label{eqsurvie}
\P^\gamma\left (\tau^{A_n} =+\infty \right) > 1 -2\delta^2.
\end{equation} 
We take then $h>0$ given by Lemma~\ref{lem0}.
Choose an integer $N'$ such that
\begin{equation}
\label{choixN'}
\left(1- \P^\gamma\left( _n\underline{\eta}_{h}^{0} \supset A_n\right) \right)^{N'} \le \delta,
\end{equation}
 and then $N$ such that every finite subset $A$ of $\Zd$ with cardinal $N$ contains a subset $A'$ of $N'$ points such that
$$\forall x, y \in A' \quad (x\ne y)\Longrightarrow \|x-y\|_\infty \ge 2n+1.$$
Since background random variables in disjoint areas are independent, as a consequence of~\eqref{choixN'}, we get that for every subset $A$ of $\Zd$.
\begin{align}
\label{silestgrandN}
|A|\ge N& \quad \Longrightarrow\quad  \P^{\gamma}(\exists x\in A\quad {}_{n+\|A\|_{\infty}}\underline{\eta}_{h}^{x} \supset x+A_n)\ge 1-\delta
\end{align}
Similarly, there exists $M>0$ such that
\begin{align}
\label{silestgrandM}
|A|\ge M&\quad  \Longrightarrow \quad \P^{\gamma}(\exists x\in A\quad {}_{2n+\|A\|_{\infty}}\underline{\eta}_{h}^{x} \supset x+ne_1+A_n)\ge 1-\delta
\end{align}
Since $1-2\delta<1-2 \delta^2< \mathbb P^\gamma \left (\tau^{A_n}=+\infty \right)$, there exist by Lemma~\ref{lem2} $T_0\in\N$ and a map $L':[T_0,+\infty)\to \N$ such that
$$\forall T\ge T_0\quad \forall L\ge L'(T)\quad \P^\gamma \left( |_{L}\underline{\eta}_{T}^{A_n}| > 2^dN\right)\ge 1-\delta.$$
Define $L_0=L'(T_0)$, then for $i\ge 0$ 
$$T_{i+1}=\inf\left\{T>T_i;\  \P^\gamma \left( |_{L_i}\underline{\eta}_{T}^{A_n}| > 2^dN\right)<1-2\delta\right\}.$$
Note that $T_{i+1}<+\infty$ is finite because with assumption $(H)$ we have $$\displaystyle \lim_{t \to +\infty} \P^\gamma \left( |_{L_i}\underline{\eta}_{t}^{A_n}| > 2^dN\right)=0.$$
Then simply take $L_{i+1}=\max(L'(T_{i+1}),L_i+1)$. Then, we have
two increasing sequences $(T_i)_{i\ge 1}$ and $(L_i)_{i\ge 1}$ such that
\begin{align}
\label{choixT1L1}
&\forall i\ge 1 \quad \P^\gamma \left( |_{L_i}\underline{\eta}_{T_i}^{A_n}| > 2^dN\right) \ge 1 -2 \delta\\
\label{condition limite}
&\forall i \ge 1\quad \P^\gamma \left( |_{L_i}\underline{\eta}_{T_i+1}^{A_n}| > 2^dN\right) < 1 -2 \delta.
\end{align}
With Lemma~\ref{lem3} and \eqref{eqsurvie}, there exists $K$ such that 
$$
\P^\gamma  \left( \begin{array}{c}
\underline{N}^{A_n}(L_K,T_K+1) \\
\le d2^dM 
\end{array}
\right) \P^\gamma \left( \begin{array}{c}
|_{L_K}\underline{\eta}_{T_K+1}^{A_n}| \\ \le 2^dN \end{array} \right)  
\le  2\delta^2.
$$
From now on, set $L=L_K$ and $T=T_K+1$.
We get
\begin{align*}
\P^\gamma \left( |_{L}\underline{\eta}_{T-1}^{A_n}| > 2^dN\right) & \ge  1 -2 \delta, \\
\P^\gamma  \left( \underline{N}^{A_n}(L,T) > d2^dM \right)  
& \ge   1 - \frac{2\delta^2}{1-\P \left( 
|_{L}\underline{\eta}_{T}^{A_n}|  > 2^dN \right) }
\ge 1-\delta.
\end{align*}
With lemmas~\ref{FKG1} and \ref{FKG2}, we obtain
\begin{align}
\label{enhaut} \P^\gamma \left( |_{L}\underline{\eta}_{T-1}^{A_n}\cap \{0,\dots,L\}^d| > N\right) & \ge   1 -(2 \delta)^{2^{-d}}, \\
\label{cote} \P^\gamma  \left( \underline{N}_+^{A_n}(L,T) > M \right)  &\ge  1-\delta^{2^{-d}/d}. 
\end{align}
Putting \eqref{silestgrandN} and \eqref{enhaut} together, we get
\begin{equation}
 \P^\gamma \left( \exists x \in \{0,\dots,L\}^d  \quad _{L+2n}\underline{\eta}_{T-1+h}^{A_n} \supset x+A_n 
\right)  \ge  (1 -(2 \delta)^{2^{-d}})(1-\delta).
\end{equation}
Now, we can use \eqref{cote} and \eqref{silestgrandM} starting from the translated $x+A_n$ occupied at time $T-1+h$: it leads to
\begin{equation}
\P^\gamma \left( \begin{array}{c}
\exists x \in \{L+n,\dots,2L+n\}\times \{0,\dots,2L\}^{d-1}, \\
\exists t \in \{T-1+2h,\dots,2T-1+2h\} \\
 _{2L+2n}\eta_{t}^{A_n} \supset x+A_n
\end{array} \right)  \ge  (1 -(2 \delta)^{2^{-d}})(1-\delta)(1-\delta^{2^{-d}/d})(1-\delta).
\end{equation}

By choosing $\delta>0$ small enough, the right-hand side can be made larger than $1-\varepsilon$, and renaming $T+h$ by $T$ gives the result. 

\subsubsection{Second case: assume that $(H^c)$ holds.} We can not work with the number of occupied sites anymore, so we must work with the number of particles.

Let $\varepsilon>0$.
Let $\delta\in (0,1)$ be such that $\delta \le\epsilon/4$ and $$(1 -(2 \delta)^{2^{-d}})(1-\delta)(1-\delta^{2^{-d}/d})(1-\delta)\ge 1-\epsilon.$$
We must again split the proof into two complementary subcases.

\begin{align}
&(A_{\delta}):\quad \forall L>0\quad  \miniop{}{\limsup}{T \to +\infty}  \P^\gamma (|_{L}\eta_{T}^{A_n}| > 0)< 1-2\delta\\
&(A_{\delta}^c):\quad \exists L>0 \quad  \miniop{}{\limsup}{T \to +\infty}  \P^\gamma (|_{L}\eta_{T}^{A_n}| >0)\ge  1-2\delta.
\end{align}

\underline{Suppose first that  $(H^c)$  and $(A_{\delta})$ hold.}

Choose $n$, $N$, $M$ exactly as in \eqref{eqsurvie}, \eqref{silestgrandN}  and
\eqref{silestgrandM}. Under $(H^c)$, by Lemma~\ref{casetend}, we can choose $J$ large enough to have
\begin{equation}
\label{choixJ}
\left(1- \P^\gamma\left( ne_1+A_n \subset {}_{B_n}\eta^{(J)}_{2n} \right) \right) \le \delta.
\end{equation}
Set $R=\max\{N,M,J\}^2$.

Using  $1-2\delta<1-2 \delta^2< \mathbb P^\gamma \left (\tau^{A_n}=+\infty \right)$, $(A_{\delta})$
and Lemma~\ref{lem2} together, the same reasoning as in case $(H)$ gives the existence of two increasing sequences $(T_i)_{i\ge 1}$ and $(L_i)_{i\ge 1}$ such that
\begin{align}
\label{choixT1L1bis}
&\forall i\ge 1 \quad \P^\gamma \left( |_{L_i}\eta_{T_i}^{A_n}| > 2^dR\right) \ge 1 -2 \delta\\
\label{conditionlimite}
&\forall i\ge 1 \quad \P^\gamma \left( |_{L_i}\eta_{T_i+1}^{A_n}| > 2^dR\right) < 1 -2 \delta.
\end{align}

By Lemma~\ref{lem3} and \eqref{eqsurvie}, there exists $K$ such that 
$$
\P^\gamma  \left( N^{A_n}(L_K,T_K+1) \le d2^dR \right) 
\P^\gamma \left(|_{L_K}\eta_{T_K+1}^{A_n}|  \le 2^dR \right)  
\le  2\delta^2,
$$
From now on, set $L=L_K$ and $T=T_K+1$.
We get
\begin{align*}
\P^\gamma \left( |_{L}\eta_{T-1}^{A_n}| > 2^dR\right) & \ge  1 -2 \delta, \\
\P^\gamma  \left( {N}^{A_n}(L,T) > d2^dR \right)  
& \ge   1 - \frac{2\delta^2}{1-\P^\gamma \left( 
|_{L}{\eta}_{T}^{A_n}|  > 2^dR \right) }
\ge 1-\delta.
\end{align*}
By lemmas~\ref{FKG1} and \ref{FKG2}, we obtain
\begin{align}
\P^\gamma \left( |_{L}{\eta}_{T-1}^{A_n}\cap \{0,\dots,L\}^d| > R\right) & \ge   1 -(2 \delta)^{2^{-d}}, \label{enhautbis}\\
\P^\gamma  \left( {N}_+^{A_n}(L,T) > R \right)  &\ge  1-\delta^{2^{-d}/d}. \label{cotebis}
\end{align}
But if $|_{L}{\eta}_{T-1}^{A_n}\cap \{0,\dots,L\}^d| > R$, 
\begin{itemize}
\item either $|_{L}\underline{\eta}_{T-1}^{A_n}\cap \{0,\dots,L\}^d| \ge  \sqrt R \ge N$, 
\item or there is one $x \in{}_{L}\underline{\eta}_{T-1}^{A_n}\cap \{0,\dots,L\}^d$ such that $\eta_{T-1}^{A_n}(x) \ge \sqrt R \ge J$.
\end{itemize} 
Using Equation \eqref{enhautbis}, either the choice we made for $N$ or the choice \eqref{choixJ} we made for $J$ leads to
\begin{equation}
 \P^\gamma \left( \exists x \in \{0,\dots,L\}^d  \quad _{L+2n}\underline{\eta}_{T-1+h}^{A_n} \supset x+A_n 
\right)  \ge  (1 -(2 \delta)^{2^{-d}})(1-\delta).
\end{equation}
Now, we can use \eqref{cotebis} starting from the translated $x+A_n$ occupied at time $T-1+h$. But if ${N}_+^{A_n}(L,T) > R$, 
\begin{itemize}
\item either the number of occupied sites on this face is larger than $\sqrt R \ge M$, 
\item or one of the occupied sites contains more than $\sqrt R \ge J$ particles.
\end{itemize} 
Using Equation \eqref{cotebis}, either the choice 
we made for $M$ or the choice \eqref{choixJ} we made for $J$ leads to
\begin{align*}
& \P^\gamma \left( \begin{array}{c}
\exists x \in \{L+n,\dots,2L+n\}\times \{0,\dots, 2L\}^{d-1}, \\
\exists t \in \{T-1+2h,\dots,2T-1+2h\} \\
 _{2L+2n}\eta_{t}^{A_n} \supset x+A_n
\end{array} \right)  \\
\ge & (1 -(2 \delta)^{2^{-d}})(1-\delta)(1-\delta^{2^{-d}/d})(1-\delta) \ge 1-\epsilon.
\end{align*}
Renaming $T+h$ by $T$ gives the result. 

\medskip

\underline{Suppose finally that $(H^c)$ and $(A_{\delta}^c)$ hold }\\
In other words, there exists $L$ such that 
\begin{align}
\label{rouge}
& \lim_{T \to +\infty}  \P^\gamma (|_{L}\eta_{T}^{A_n}| > 0)= \P^\gamma(\,_L\tau^{A_n} =+\infty )\ge 1-2\delta.
\end{align}
We see that this is the most favorable case for the branching random walk, because it can survive even in a finite box. We will see it is also the easiest case to deal with.
 
We fix then a large $L$ such that~\eqref{rouge} is satisfied.
There exists $h>0$ such that
$$\P^\gamma(\forall x \in \{-L,\dots,L\}^d , \; (L+n)e_1+A_n \subset \,_{2L+2n}\underline{\eta}^x_h)=r>0.$$
For $k \ge 0$, set $T_k=kh$ and  
$$B_k=\{\forall x \in \{-L,\dots,L\}^d , \; (L+n)e_1+A_n \subset \,_{2L+2n}\underline{\eta}^x_h \circ \theta_{T_k}\}.$$
The $B_k's$ are clearly independent. Moreover
$$ \{\,_L\tau^{A_n} =+\infty\} \cap B_k \subset \{(L+n)e_1+A_n \subset \,_{2L+2n}\underline{\eta}^{A_n}_{T_{k+1}}\}.$$
Thus, for any $K>1$, 
\begin{align*}
&\P^\gamma(\,_L\tau^{A_n} =+\infty, \; \forall t \in \{T_{K},\dots,T_{2K}\} \; (L+n)e_1+A_n \not\subset \,_{2L+2n}\underline{\eta}^{A_n}_{t}) \\
& \le  \P^\gamma(\,_L\tau^{A_n} =+\infty, \; \miniop{2K-1}{\cap}{k=K-1}B_k^c) \\
& \le  \P^\gamma\left(\miniop{2K-1}{\cap}{k=K-1}B_k^c\right)=(1-r)^{K+1}.
\end{align*}
We set now $2\delta=\varepsilon/2$, and take $K$ large enough to have $(1-r)^{K+1}<\varepsilon/2$, and
\begin{align*}
& \P^\gamma \left( 
\begin{array}{c}
\exists x \in \{L+n,\dots,2L+n\}\times \{0,\dots,2L\}^{d-1}, t \in \{T,\dots,2T\} \\
 _{2L+2n}\eta_{t}^{A_n} \supset x+A_n 
 \end{array}\right) \\
& \ge   \P^\gamma(\,_L\tau^{A_n} =+\infty, \; \exists t \in \{T_{K},\dots,T_{2K}\} \; (L+n)e_1+A_n \subset \,_{2L+2n}\underline{\eta}^{A_n}_{t}) \\
& \ge  \P^\gamma(\,_L\tau^{A_n} =+\infty)-\P^\gamma(\,_L\tau^{A_n} =+\infty, \; \forall t \in \{T_{K},\dots,T_{2K}\} \; (L+n)e_1+A_n \not\subset \,_{2L+2n}\underline{\eta}^{A_n}_{t}) \\
& \ge  1-2\delta-\varepsilon/2=1-\varepsilon.
 \end{align*}

One can see that we had to split the study into three cases and to make (not so) different proofs. This can seem odd, but we think it is unavoidable.
Note for instance that there is a similar disjunction in Yao and Chen~\cite{MR2946436}.

\def\refname{References}
\bibliographystyle{plain}


\end{document}